\date{}
\renewcommand{\uppercasenonmath}[1]{}
\newtheorem{thm}[subsection]{Theorem}
\newtheorem{cor}[subsection]{Corollary }
\newtheorem{Def}[subsection]{Definition}
\newtheorem{lem}[subsection]{Lemma}
\newtheorem{remark}[subsection]{Remark}
\newtheorem{prop}[subsection]{Proposition}
\newtheorem{exm}[subsection]{Example}
\newcommand{\bthm}{\begin{thm} }
\newcommand{\ethm}{\end{thm} }
\newcommand{\bpro}{\begin{prop}}
\newcommand{\epro}{\end{prop}}
\newcommand{\bdf}{\begin{Def}}
\newcommand{\edf}{\end{Def}}
\newcommand{\bexm}{\begin{exm}}
\newcommand{\eexm}{\end{exm}}
\newcommand{\blem}{\begin{lem}}
\newcommand{\elem}{\end{lem}}
\newcommand{\bpf}{\begin{proof}}
\newcommand{\epf}{\end{proof}}
\newcommand{\bcor}{\begin{cor}}
\newcommand{\ecor}{\end{cor}}
\newcommand{\ba}{\begin{array}}
\newcommand{\ea}{\end{array}}
\newcommand{\bea}{\begin{eqnarray}}
\newcommand{\eea}{\end{eqnarray}}
\newcommand{\brem}{\begin{remark}}
\newcommand{\erem}{\end{remark}}
\newsavebox{\tablebox}
\begin{document}

\begin{center}
{\large  \bf The graphs which are cospectral with the generalized pineapple graph}
\footnote {Supported by NSFC (Nos.  12071209, 12231009).}\\

 \vskip 0.8cm
 {\small  Borchen Li \ \ and\ \ Qingzhong Ji \footnote{Corresponding author.\\ \indent E-mail addresses: borchenli@smail.nju.edu.cn (B. Li),\; qingzhji@nju.edu.cn (Q. Ji)}}\\
{\small Department of Mathematics, Nanjing University, Nanjing
210093, P.R.China}

\vskip 3mm
\end{center}

{\bf Abstract:} {\small Let $p, k, q$ be positive integers with   $p-2 \geqslant k$ and let  $K_{p,k}^{q}$ be the generalized pineapple graph which is obtained by joining independent set of $q$ vertices with $k$ vertices of a complete graph $K_{p}.$ In \cite{TSH2}, Haemers et al. constructed graphs which cospectral with $K_{p,1}^{q}.$ In this paper, we determine all graphs which are cospectral with $K_{p,k}^{q}$ by considering the eigenvalues of its adjacency matrix. Moreover, We extend the conclusions of Haemers et al. to a broader context.}

{\bf Keywords:}   cospectral graph; pineapple graph; adjacency matrix; eigenvalues.

{\bf MSC:} 05C50.

\section{\bf Introduction}\label{1}

Throughout this paper, all graphs are simple and undirected. All concepts used in this paper can be found in \cite{BM} and in the articles  cited below, unless defined otherwise.
Let $G$ be a graph with vertex set $V(G)$ and edge set $E(G)$. The adjacency matrix of $G$ denoted by $A(G) = [a_{ij}],$ is a square matrix whose entries are indexed by $n \times n$ and $a_{ij} = 1$ if $\{i,j\} \in E(G)$ and $0$ otherwise, where $n$ is the order of $G$, i.e. the number of the vertices. The characteristic polynomial $f_{G}(x)$ of $G$ equals $det(xI-A(G)),$ where $I$ denotes the identity matrix. We also denote the all-one matrix by $J$ and all-zero matrix by $O.$ Since $A(G)$ is a real symmetric matrix, all roots of $f_{G}(x)$ are real numbers. Hence we can denote the eigenvalues of $A(G)$ by $\lambda_{1}(G) \geqslant \lambda_{2}(G) \geqslant \cdots \geqslant \lambda_{n}(G),$ where $n$ is the order of $G$. The eigenvalues of $A(G)$ compose the adjacency spectrum of $G.$ Two given graphs $G$ and $H$ are said to be cospectral if they share the same adjacency spectrum, denoted by $G \simeq H.$ A graph is determined by its adjacency spectrum, shortly DAS, if there is no other non-isomorphic graph with the same adjacency spectrum. Otherwise, it is said to be not determined by its adjacency spectrum, shortly non-DAS. We refer \cite{BH} \cite{CRS} \cite{GM} for more information about graph spectra.

The generalized pineapple graph $K_{p,k}^{q}\;(p-2 \geqslant k \geqslant 1,q \geqslant 1)$ is obtained by joining independent set of $q$ vertices with $k$ vertices of a complete graph $K_{p}.$  In an earlier paper \cite{TSH1}, Haemers et al. constructed graphs which cospectral with $K_{p,1}^{q}$ for every $p \geqslant 4$ and several values of $q$ such that they are non-isomorphic with $K_{p,1}^{q},$ and they showed that $K_{p,1}^{q}$ is DAS when $p = 3,$ $q \leqslant 2,$ or $p = q+1 = 4$ in the same paper. Recently, Haemers et al. \cite{TSH2} find for which values of $p$ and $q$ the pineapple graph $K_{p,1}^{q}$ is determined by its adjacency spectrum among connected graphs.

In this paper, we shall determined all graphs cospectral with the generalized pineapple graph $K_{p,k}^q.$ Recall that the complete split graph ${\rm CS}_{p,q}$ is obtained from the complete graph $K_{q}$ by joining independent set of $p$ vertices (in other words, ${\rm CS}_{p,q}$ is the complete multipartite graph $K_{p,1,\ldots,1}$ with $p+q$ vertices). Observe that $K_{p,p-1}^q=K_{p-1,p-1}^{q+1}={\rm CS}_{q+1,p-1}$ and $K_{p,p}^q={\rm CS}_{q,p}$ are the complete split graphs. Hence it suffices to consider the cases $p-2 \geqslant k \geqslant 1$ and $q\geqslant 1.$  We will prove that $K_{p,k}^{q}$ contains $0$ and $-1$ as eigenvalues with multiplicities $q-1$ and $p-2,$ respectively, and exactly three eigenvalues different from $-1$ and $0,$ see Lemma \ref{lem2.5}. What is more, two of them are positive and the other one is less than $-1.$ Let $\Omega$ be the set of graphs which eigenvalues at most three different from $-1$ and $0.$  Haemers determined $\Omega$ in paper \cite{Hae}. By use of this result we shall obtain the spectral characterization of the generalized pineapple graph $K_{p,k}^{q}.$ In \cite{TSH2}, the authors determined the all graphs which are cospectral with $K_{p,1}^{q}.$ Here we're going to determine  all graphs which are cospectral but nonisomorphic with $K_{p,k}^{q}$ in a similar way. It also generalized Haemers et al. conclusions to the generalized case.

\section{\bf Quotient graph and mixed extensions}\label{2}

In this section, we shall introduce briefly the quotient graph $G/\pi$ with respect to an equitable partition $\pi$ and mixed extensions of a graph. These are two common ways to make graphs smaller or bigger. We refer \cite{BH} \cite{Go} \cite{GR} for more information about the quotient graph and mixed extensions of graphs.

Let $G$ be a graph with the vertex set $V(G),$ and let $\pi= (V_{1},\ldots,V_{m})$ be a partition of $V(G)$. A partition $\pi$ of $V(G)$ is equitable partition if each vertex in $V_i$ has the same  number of neighbours  in $V_{j}$ for any $j\neq i.$  For any equitable partition $\pi= (V_{1},\ldots,V_{m})$ of $V(G),$ let $G/\pi$ be the quotient graph of $G$ with respect to the equitable partition $\pi.$ The quotient matrix $A(G/\pi)$ is the $m \times m$ matrix with $ij$-$th$ entry equal to  the number of the neighbors of a fixed vertex in $V_{i}$ in the part $V_{j}.$

\begin{lem}\label{2.1}{\rm(\cite{BH})}
Let $\pi$ be an equitable partition of the graph $G$ with $m$ parts. Then the characteristic polynomial of $A(G/\pi)$ divides the characteristic polynomial of $A(G).$
\end{lem}

The characteristic polynomial of ${\rm CS}_{p,q}$  is given as follows.

\begin{lem}\label{lem2.2}{\rm(\cite{TSH1})}
The characteristic polynomial of ${\rm CS}_{p,q}$ equals
$$f_{{\rm CS}_{p,q}}(x) = (x + 1)^{q-1}x^{p-1}(x^{2}-(q-1)x-pq).$$
\end{lem}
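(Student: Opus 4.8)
The plan is to locate all $p+q$ eigenvalues of ${\rm CS}_{p,q}$ together with their multiplicities and then read off $f_{{\rm CS}_{p,q}}(x)$ as the product of the associated factors. Order the vertices so that the independent set $V_1$ of $p$ vertices comes first and the clique $V_2$ of $q$ vertices second; then
$$A({\rm CS}_{p,q}) = \begin{pmatrix} O & J \\ J & J-I \end{pmatrix},$$
with diagonal blocks of sizes $p$ and $q$. First I would extract the quadratic factor from the equitable partition $\pi=(V_1,V_2)$: each vertex of $V_1$ has $0$ neighbours in $V_1$ and $q$ in $V_2$, and each vertex of $V_2$ has $p$ neighbours in $V_1$ and $q-1$ in $V_2$, so the quotient matrix is $\left(\begin{smallmatrix} 0 & q \\ p & q-1 \end{smallmatrix}\right)$, whose characteristic polynomial is $x^2-(q-1)x-pq$. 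By Lemma \ref{2.1} this divides $f_{{\rm CS}_{p,q}}(x)$, and the two corresponding eigenvectors lift to $A$-eigenvectors that are constant on each part.

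Next I would write down explicit eigenvectors supported on a single part to account for the remaining eigenvalues. A vector $v$ that vanishes on $V_2$ and whose $V_1$-entries sum to $0$ satisfies $A v = 0$: a $V_1$-coordinate of $Av$ reads only $V_2$-entries (all zero), while a $V_2$-coordinate reads the full sum over $V_1$ (which is $0$). These $v$ span a space of dimension $p-1$, so $0$ is an eigenvalue of multiplicity at least $p-1$. Symmetrically, a vector $w$ that vanishes on $V_1$ and whose $V_2$-entries sum to $0$ satisfies $A w = -w$: its $V_1$-coordinates give the (vanishing) sum over $V_2$, and at a vertex $k\in V_2$ one gets $\sum_{j\in V_2} w_j - w_k = -w_k$. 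These $w$ span a space of dimension $q-1$, so $-1$ is an eigenvalue of multiplicity at least $q-1$.

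Finally I would assemble the three families into a direct sum decomposition. The $(p-1)$-dimensional $0$-eigenspace and the $(q-1)$-dimensional $(-1)$-eigenspace are orthogonal to every vector that is constant on each part, and to each other since they live on complementary parts; together with the $2$-dimensional $A$-invariant space $W$ of part-constant vectors spanned by the quotient lifts, their dimensions sum to $(p-1)+(q-1)+2=p+q$. Hence $\mathbb{R}^{p+q}$ splits as a direct sum of $A$-invariant subspaces on which $A$ acts as $0$, as $-I$, and with characteristic polynomial $x^2-(q-1)x-pq$, so
$$f_{{\rm CS}_{p,q}}(x)=x^{p-1}(x+1)^{q-1}\bigl(x^2-(q-1)x-pq\bigr).$$
I expect the only genuine subtlety to be this independence and dimension bookkeeping; each individual eigenvalue check is a one-line matrix computation, and because the conclusion comes from an invariant-subspace decomposition rather than from coprimality of the factors, it remains valid in the degenerate cases where the quadratic happens to share a root with $0$ or $-1$. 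An alternative, purely computational route would evaluate $\det(xI-A)$ by taking the Schur complement of the block $(x+1)I-J$ on $V_2$ and using that the all-ones vector is an eigenvector of $J$; this produces the same factorization after simplification.
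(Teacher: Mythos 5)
Your proof is correct. The paper itself gives no argument for Lemma \ref{lem2.2} --- it is quoted from \cite{TSH1} --- so there is no in-paper proof to match; but your method is essentially the same one the paper deploys for the analogous computation in Lemma \ref{lem2.5}: extract the quadratic factor from the quotient matrix of the equitable partition into independent set and clique, and then account for the eigenvalues $0$ and $-1$ separately. The one genuine difference is in the bookkeeping. The paper pins down the multiplicities of $0$ and $-1$ by computing ${\rm rank}\,A$ and ${\rm rank}(I+A)$ and then invokes coprimality of $x(x+1)$ with the cubic (respectively quadratic) factor to assemble the characteristic polynomial; you instead exhibit explicit eigenvectors supported on a single part and sum the dimensions of three mutually orthogonal $A$-invariant subspaces to $p+q$. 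Your route buys a little extra robustness, as you note: for ${\rm CS}_{1,q}\cong K_{q+1}$ the quadratic $x^2-(q-1)x-q$ has $-1$ as a root, so a coprimality argument would need a separate remark, whereas the invariant-subspace decomposition delivers the stated product of factors (with the correct total multiplicity of $-1$) without modification. All the individual verifications --- the quotient matrix $\left(\begin{smallmatrix}0&q\\ p&q-1\end{smallmatrix}\right)$, the $(p-1)$-dimensional kernel on the independent set, the $(q-1)$-dimensional $(-1)$-eigenspace on the clique, and the orthogonality to the part-constant vectors --- check out.
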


Let $G$ be a graph with the vertex set $V(G) = \{1, 2, \ldots, n\}$ and let $V_{1}, V_{2}, \ldots, V_{n}$ be mutually disjoint nonempty finite sets. We define a graph $H$ with vertex set $V(H) = V_{1} \bigcup V_{2}\bigcup \cdots \bigcup V_{n}$ as follows. For any $u \in V_{i}$ and $v \in V_{j}$ where $i, j \in V(G)$ and $i \neq j,$ $u$ is adjacent to $v$ in $H$ if and only if $i$ is adjacent to $j$ in $G,$ and for each $i \in V(G),$ all of the vertices of $V_{i}$ are either mutually adjacent, or mutually nonadjacent. We call such $H$ a mixed extension of $G.$ A mixed extension of a graph with $n$ vertices is represented by an $n$-tuple $(t_{1}, t_{2} \ldots, t_{n})$ of nonzero integers, where $t_{i} < 0$ indicates that $V_{i}$ is a coclique of order $-t_{i}$ and $t_{i} > 0$ means that $V_{i}$ is a clique of order $t_{i}$ \cite{Hae}. Hence, the generalized pineapple graph $K_{p,k}^{q}$ can be represented as a mixed extension of $P_{3}$ of type $(p-k,k,-q).$ We refer \cite{Hae} for more information about mixed extension of a graph.

In \cite{Hae}, Haemers determined the graph class $\Omega$ of all graphs (with no isolated vertices) which have exactly three eigenvalues different from $-1$ and $0.$ The results  were given by the following two lemmas.

\begin{lem}\label{lem2.3}{\rm(\cite{Hae})}
Suppose $H \in \Omega$ and has no isolated vertices. If $H$ is disconnected with exactly three eigenvalues different
from $-1$ or $0,$ then $H$ is one of the following graphs.
 \begin{itemize}
  \item[{\rm(1)}]$K_{p} \bigcup K_{q} \bigcup K_{r}$ with $p,q,r \geqslant 2.$
  \item[{\rm(2)}] $K_{p}\bigcup K_{q,r}$ with $p,q,r \geqslant 2.$
  \item[{\rm(3)}] $K_{p} \bigcup {\rm CS}_{q,r}$ with $p,q \geqslant 2,$ $r \geqslant 1.$
  \end{itemize}
\end{lem}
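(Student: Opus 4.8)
The plan is to reduce the problem to a classification of the possible connected components of $H$, exploiting that the spectrum of a disconnected graph is the multiset union of the spectra of its components. First I would record the basic observation that every connected component $C$ of $H$ contributes at least one eigenvalue outside $\{-1,0\}$: since $H$ has no isolated vertex, $C$ has at least two vertices, so by Perron--Frobenius its largest eigenvalue $\lambda_1(C)$ is simple and positive, whence $\lambda_1(C)\notin\{-1,0\}$. Because $H$ has exactly three eigenvalues (counted with multiplicity) outside $\{-1,0\}$, it follows that $H$ has at most three components; being disconnected, it has exactly two or three, and the three eigenvalues outside $\{-1,0\}$ are distributed among them as $1+1+1$ (three components) or $1+2$ (two components).

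The core of the argument is then to classify connected graphs on at least two vertices having exactly one, respectively exactly two, eigenvalues outside $\{-1,0\}$. In the one-eigenvalue case the spectrum is $\lambda_1,0^{(a)},(-1)^{(b)}$ with $\lambda_1>0$, so the least eigenvalue is $\geqslant -1$; a connected graph whose least eigenvalue is $\geqslant -1$ is complete (write $A+I$ as a Gram matrix of unit vectors with inner products in $\{0,1\}$, which forces adjacency to be an equivalence relation), so such a component is $K_n$ with $n\geqslant 2$. In the two-eigenvalue case the largest eigenvalue $\mu_1>0$ is simple (Perron); a short trace and least-eigenvalue argument then shows the other eigenvalue $\mu_2$ outside $\{-1,0\}$ is simple and satisfies $\mu_2<-1$, so the spectrum is $\mu_1,0^{(a)},(-1)^{(b)},\mu_2$ with $\mu_1\neq\mu_2$. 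It remains to prove that such a graph is a mixed extension of $K_2$ and to enumerate these: the types $(-q,-r)$, $(r,-q)$ and $(r,q)$ give, respectively, the complete bipartite graph $K_{q,r}$, the complete split graph $\mathrm{CS}_{q,r}$, and the complete graph $K_{q+r}$. Discarding the complete one (which has only one eigenvalue outside $\{-1,0\}$) leaves exactly $K_{q,r}$ and $\mathrm{CS}_{q,r}$, whose spectra I would check against Lemma~\ref{lem2.2} to confirm the eigenvalue count and the degenerate ranges to be excluded.

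Finally I would assemble the components. Three components, each of type $K_n$, yield $K_p\cup K_q\cup K_r$ with $p,q,r\geqslant 2$, which is case~(1). Two components split the eigenvalue budget as $1+2$, so one component is some $K_p$ with $p\geqslant 2$ and the other is $K_{q,r}$ or $\mathrm{CS}_{q,r}$, giving cases~(2) and~(3). Here I would be careful with the boundary ranges, noting that the star $K_{1,q}$ is the degenerate complete split graph $\mathrm{CS}_{q,1}$, which is precisely why case~(3) allows $r=1$ while case~(2) requires $q,r\geqslant 2$, and checking that no graph is listed twice.

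I expect the main obstacle to be the structural step in the two-eigenvalue classification, namely proving that a connected graph with exactly two eigenvalues outside $\{-1,0\}$ is a mixed extension of $K_2$. The natural route is the reduction that merges \emph{duplicate} vertices (non-adjacent with equal neighbourhoods, each such pair contributing a $0$-eigenvector) and \emph{co-duplicate} vertices (adjacent with equal closed neighbourhoods, each such pair contributing a $(-1)$-eigenvector): this exhibits the graph as a mixed extension of a reduced graph whose quotient matrix, by Lemma~\ref{2.1}, carries exactly the eigenvalues outside $\{-1,0\}$, forcing the reduced graph to have at most two vertices. Making this reduction airtight --- in particular bounding the size of the reduced graph by the number of eigenvalues outside $\{-1,0\}$ and checking that no eigenvalue of the quotient accidentally equals $-1$ or $0$ --- is the delicate part on which the whole classification rests.
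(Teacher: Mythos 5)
This lemma is imported verbatim from \cite{Hae}; the paper offers no proof of it, so your attempt can only be measured against the standard argument. Your overall architecture is sound: each main component contributes at least one eigenvalue outside $\{-1,0\}$ (its Perron root is $\geqslant 1$), so there are two or three components splitting the budget as $1+2$ or $1+1+1$; the one-eigenvalue components are complete graphs via the least-eigenvalue-$\geqslant -1$ Gram-matrix argument; and your bookkeeping of the boundary ranges ($K_{1,r}=\mathrm{CS}_{r,1}$ explaining why case (3) allows $r=1$) is correct. The problem is the step you yourself flag as delicate, and it is not merely delicate --- as proposed it fails. The duplicate/co-duplicate reduction does not bound the order of the reduced graph by the number of eigenvalues outside $\{-1,0\}$: eigenvalues $0$ and $-1$ need not arise from duplicate or co-duplicate pairs (e.g.\ $P_5$ has eigenvalue $0$ and $C_6$ has eigenvalue $-1$ with multiplicity two, yet neither has a duplicate or co-duplicate pair), and correspondingly the quotient matrix of Lemma \ref{2.1} may retain eigenvalues equal to $0$ or $-1$. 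Indeed $P_5$ is its own reduction, has five vertices, and has only three eigenvalues outside $\{-1,0\}$, so the inequality ``order of reduced graph $\leqslant$ number of eigenvalues outside $\{-1,0\}$'' is simply false; for the two-eigenvalue case it happens to hold, but only as a consequence of the classification you are trying to prove, so the route is circular.

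The gap is closed by a different pair of tools. First, a connected graph with exactly two eigenvalues outside $\{-1,0\}$ has exactly one positive eigenvalue (your trace/least-eigenvalue argument correctly shows the second exceptional eigenvalue is $<-1$), and by Smith's theorem a connected graph with $\lambda_2\leqslant 0$ is a complete multipartite graph $K_{n_1,\ldots,n_t}$. Second, if $t\geqslant 3$ and at least two parts have size $\geqslant 2$, then the graph contains an induced $K_{1,2,2}$, whose spectrum is $1+\sqrt5,\,0,\,0,\,1-\sqrt5,\,-2$; eigenvalue interlacing then forces at least two eigenvalues $<-1$, hence at least three eigenvalues outside $\{-1,0\}$, a contradiction. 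So either $t=2$ (complete bipartite) or all parts but one are singletons (complete split), and Lemma \ref{lem2.2} confirms the eigenvalue count for $\mathrm{CS}_{q,r}$ exactly when $q\geqslant 2$. With this substitution your assembly of cases (1)--(3) goes through.
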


\begin{lem}\label{lem2.4}{\rm(\cite{Hae})}
Suppose $H \in \Omega$ and has no isolated vertices. If $H$  is connected with exactly three eigenvalues different
from $-1$ or $0,$ then $H$ is one of the following graphs.
  \begin{itemize}
  \item[{\rm(1)}]$H = K_{p,q,r}$ with $p,q,r \geqslant 2.$
  \item[{\rm(2)}] $H$ is a mixed extension of $K_{3}$ of type $(-p,-q,r)$ with $p,q \geqslant 2,$ $r \geqslant 1.$
  \item[{\rm(3)}] $H$ has exactly two positive eigenvalues and exactly one eigenvalue less than $-1.$
  \end{itemize}
\end{lem}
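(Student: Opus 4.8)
The plan is to organize the proof around the integer $n_+$ equal to the number of positive eigenvalues of $H$. Since $H$ is connected and has at least one edge, the Perron--Frobenius theorem gives $\lambda_1(H)>0$, and $\lambda_1\notin\{-1,0\}$, so at least one of the three special eigenvalues (those different from $-1$ and $0$) is positive; as every positive eigenvalue is automatically special and there are only three special eigenvalues in all, $n_+\in\{1,2,3\}$. I would then treat these three values separately, and the point is that the three conclusions correspond exactly to $n_+=1$ (which splits into the two structural families (1) and (2)) and $n_+=2$ (which gives (3)).

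First I would dispose of the case $n_+\geqslant 2$, which is the clean half. Because $H$ has an edge it contains $K_2$ as an induced subgraph, so Cauchy interlacing forces $\lambda_n(H)\leqslant\lambda_2(K_2)=-1$. If all three special eigenvalues were positive, every other eigenvalue would lie in $\{-1,0\}$, so $\lambda_n(H)\in\{-1,0\}$, and with $\lambda_n(H)\leqslant-1$ we would get $\lambda_n(H)=-1$; but a connected graph with least eigenvalue $-1$ is complete, and $K_n$ has only one eigenvalue different from $-1$, a contradiction. Hence $n_+=2$, and there is a unique negative special eigenvalue $\mu$. If $-1<\mu<0$, then every eigenvalue of $H$ is positive, equal to $\mu$, or in $\{-1,0\}$, so $\lambda_n(H)\in\{\mu,-1\}$; both are impossible, since $\lambda_n(H)=\mu>-1$ violates the interlacing bound and $\lambda_n(H)=-1$ again forces $H=K_n$. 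Therefore $\mu<-1$, and all eigenvalues outside the three special ones lie in $\{-1,0\}$; this says precisely that $H$ has exactly two positive eigenvalues and exactly one eigenvalue less than $-1$, which is conclusion (3).

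For the case $n_+=1$ I would invoke Smith's theorem: a connected graph has exactly one positive eigenvalue if and only if it is a complete multipartite graph $K_{n_1,\dots,n_t}$. It then remains to decide which complete multipartite graphs lie in $\Omega$. Writing $a$ for the number of parts of size $\geqslant 2$ and $s$ for the number of singleton parts ($t=a+s$), I would use that within a big part the vertices are nonadjacent twins and the singletons are mutually adjacent twins: these account for $n-t$ eigenvalues $0$ and $\max\{s-1,0\}$ eigenvalues $-1$. Consequently the number of special eigenvalues equals $a$ when $s=0$ and $a+1$ when $s\geqslant 1$. Setting this equal to $3$ isolates exactly two families: $a=3,\ s=0$, namely $K_{p,q,r}$ with $p,q,r\geqslant 2$ (conclusion (1)); and $a=2,\ s\geqslant 1$, namely $K_{p,q,1,\dots,1}$ with $s$ ones, which is the mixed extension of $K_3$ of type $(-p,-q,s)$ with $p,q\geqslant 2$ and $s\geqslant 1$ (conclusion (2)).

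The main obstacle is the bookkeeping in the last paragraph: to justify the counts $a$ and $a+1$ one must show that the reduced (weighted) quotient matrix $\bar Q$ of a complete multipartite graph — obtained by collapsing each part, and collapsing all singleton parts into one clique part — has no eigenvalue equal to $-1$ or $0$, for otherwise a graph with $a=4,\ s=0$ or $a=3,\ s\geqslant 1$ could slip into $\Omega$ with only three special eigenvalues. Writing $\bar Q=\mathbf{1}\,\bar{\mathbf{n}}^{\top}-\bar D$ for the natural size vector $\bar{\mathbf{n}}$ and diagonal $\bar D$, this reduces to the nonvanishing of $\det\bar Q$ and $\det(\bar Q+I)$, which the matrix determinant lemma evaluates explicitly (for instance $\bar{\mathbf{n}}^{\top}\bar D^{-1}\mathbf{1}=t\neq 1$ yields $\det\bar Q\neq 0$, and a direct computation rules out the eigenvalue $-1$ using $n_i\geqslant 2$ in the big parts). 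This determinant check is the one genuinely computational step; granting it, the three standard inputs — Perron--Frobenius, the interlacing bound $\lambda_n\leqslant-1$ together with the characterization of least eigenvalue $-1$, and Smith's theorem — assemble directly into the stated trichotomy.
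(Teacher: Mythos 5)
Your argument is correct, but note that the paper itself offers no proof of this statement: Lemma \ref{lem2.4} is quoted verbatim from Haemers \cite{Hae}, so there is nothing internal to compare against. Your reconstruction --- splitting on the number $n_+$ of positive eigenvalues, using interlacing with $K_2$ plus the characterization of least eigenvalue $-1$ to force $n_+=2$ into conclusion (3), and Smith's theorem plus a twin/quotient count to resolve $n_+=1$ into the complete multipartite families (1) and (2) --- is essentially the standard route and matches the structure of Haemers' original proof. One small technical caveat in your final determinant check: when $s\geqslant 1$ the matrix $I-\bar D$ is singular (its last diagonal entry is $0$), so the form $\det(A+uv^{\top})=\det(A)\bigl(1+v^{\top}A^{-1}u\bigr)$ does not apply to $\det(\bar Q+I)$; you need the adjugate version $\det(A+uv^{\top})=\det(A)+v^{\top}\operatorname{adj}(A)\,u$, which gives $\det(\bar Q+I)=s\prod_{i=1}^{a}(1-n_i)\neq 0$ since $s\geqslant 1$ and each $n_i\geqslant 2$. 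With that repair the bookkeeping closes and the trichotomy follows as you state.
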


By doing simple calculation of the determinant, we obtain the  characteristic polynomials of $K_{p,k}^{q}$ as follows.

\begin{lem}\label{lem2.5} Let $p,k,q$ be positive integers with $p-2\geqslant k.$ Then the characteristic polynomial of $K_{p,k}^{q}$ as a product of coprime polynomials is given by
\begin{eqnarray}\label{lem2.5-1}f_{K_{p,k}^{q}}(x) = x^{q-1}(x+1)^{p-2}[x^{3}-(p-2)x^{2}-(p+kq-1)x+kq(p-k-1)].\end{eqnarray}
Hence, $K_{p,k}^{q}$ has exactly three eigenvalues different from $-1$ and $0$ and two of them are positive and the other one is less than $-1.$
\end{lem}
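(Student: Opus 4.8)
The plan is to compute the characteristic polynomial $f_{K_{p,k}^q}(x)=\det(xI-A)$ directly by exploiting the block structure of the adjacency matrix, and then to read off the eigenvalue information from the factored form. The graph $K_{p,k}^q$ is a mixed extension of $P_3$ of type $(p-k,k,-q)$, so its vertices split into three cells: the $p-k$ vertices of the clique that are not joined to the independent set, the $k$ vertices of the clique that are joined to the independent set, and the $q$ independent vertices. I would order the vertices accordingly and write $A(K_{p,k}^q)$ as a $3\times 3$ block matrix whose blocks are multiples of all-one matrices $J$ (off the diagonal) and $J-I$ or $O$ (on the diagonal, depending on whether the cell is a clique or a coclique).

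Next I would extract the eigenvalues $-1$ and $0$ together with their multiplicities using the eigenvector structure forced by the $J$-blocks. Any vector that is constant on each cell and orthogonal to the all-one vector within a cell is an eigenvector: vectors supported on the two clique cells (summing to zero within a cell) give eigenvalue $-1$, contributing multiplicity $(p-k-1)+(k-1)=p-2$, while vectors supported on the coclique cell (summing to zero) give eigenvalue $0$ with multiplicity $q-1$. This accounts for $(p-2)+(q-1)=p+q-3$ of the $n=p+q$ eigenvalues and explains the factor $x^{q-1}(x+1)^{p-2}$ in \eqref{lem2.5-1}.

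For the remaining three eigenvalues I would invoke the equitable partition $\pi=(V_1,V_2,V_3)$ given by the three cells; by Lemma \ref{2.1} the characteristic polynomial of the $3\times 3$ quotient matrix $A(K_{p,k}^q/\pi)$ divides $f_{K_{p,k}^q}(x)$. The quotient matrix is
\begin{equation*}
A(K_{p,k}^q/\pi)=\begin{pmatrix} p-k-1 & k & 0\\ p-k & k-1 & q\\ 0 & k & 0\end{pmatrix},
\end{equation*}
and expanding $\det(xI-A(K_{p,k}^q/\pi))$ yields the cubic $x^3-(p-2)x^2-(p+kq-1)x+kq(p-k-1)$. Since this cubic is coprime to $x^{q-1}(x+1)^{p-2}$ (its constant term $kq(p-k-1)$ is nonzero because $p-k-1\geqslant 1$, and $x=-1$ is easily checked not to be a root), the factorization into coprime polynomials follows, establishing \eqref{lem2.5-1}.

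Finally I would determine the signs of the three roots of the cubic $g(x)=x^3-(p-2)x^2-(p+kq-1)x+kq(p-k-1)$. The main obstacle is this sign analysis, since it must hold uniformly for all admissible $p,k,q$. I would argue via the intermediate value theorem and sign changes of $g$: evaluating $g$ at $x=0$ gives $kq(p-k-1)>0$, at $x=-1$ gives a negative value, and as $x\to-\infty$ we have $g\to-\infty$, which locates exactly one real root below $-1$; then examining $g(0)>0$ together with $g\to+\infty$ as $x\to+\infty$ and a suitable interior point where $g$ turns negative produces the two positive roots. Since $A(K_{p,k}^q)$ is real symmetric all three roots are real, so the sign pattern (two positive, one less than $-1$, none in $[-1,0]$) is forced, completing the proof.
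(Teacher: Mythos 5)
Your derivation of the factorization \eqref{lem2.5-1} is essentially the paper's own proof: the same $3\times3$ block form of $A(K_{p,k}^q)$, the same equitable partition whose quotient matrix (yours is a permutation of the paper's) yields the cubic $g(x)=x^{3}-(p-2)x^{2}-(p+kq-1)x+kq(p-k-1)$, and the same coprimality check at $0$ and $-1$. The only real difference is how the multiplicities of $0$ and $-1$ are pinned down: you exhibit the $(q-1)+(p-2)$ eigenvectors supported on single cells and summing to zero there, and then close the count by degree, whereas the paper computes ${\rm rank}\,A(K_{p,k}^{q})=p+1$ and ${\rm rank}(I+A(K_{p,k}^{q}))=q+2$. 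Both are valid (your opening phrase ``constant on each cell and orthogonal to the all-one vector within a cell'' is self-contradictory as written, but the parenthetical description of the eigenvectors is the correct one).

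The sign analysis, however, contains a genuine error. You assert that $g(-1)$ is negative; in fact $g(-1)=kq(p-k)>0$ (the paper records exactly this value when checking coprimality). Your stated sign pattern would actually work against you: $g(-1)<0$ together with $g(0)>0$ would place a root in $(-1,0)$, and $g(-1)<0$ together with $g(x)\to-\infty$ as $x\to-\infty$ locates no root below $-1$ at all. The correct argument is: since $g(-1)>0$ and $g(x)\to-\infty$ as $x\to-\infty$, there is a root in $(-\infty,-1)$; since the three roots are real with product $-kq(p-k-1)<0$ and sum $p-2>0$, exactly one root is negative and two are positive; hence the unique negative root is the one below $-1$ and there is no root in $[-1,0]$. (For what it is worth, the paper's own proof stops at the factorization and never argues the sign claim, so you are attempting more than the authors do --- but as written your attempt does not go through.)
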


\begin{proof} \   It's easy to see that
the adjacency matrix of $K_{p,k}^{q}$ is
$$A(K_{p,k}^{q}) =
\begin{pmatrix}
    J_{k}-I_{k}&J_{k\times(p-k)} & J_{k\times q} \\
    J_{(p-k)\times k}& J_{p-k}-I_{p-k}&0_{(p-k)\times q}\\
    J_{q\times k}&0_{q\times(p-k)} & 0_{q} \\
\end{pmatrix}.$$
The given partition of $A(K_{p,k}^{q})$ is equitable with quotient matrix
$$Q=\begin{pmatrix}k-1&p-k&q\\k&p-k-1&0\\
k&0&0\end{pmatrix}.$$
Then  the  characteristic polynomial of $Q$ is $$f_Q(x)={\rm det}(xI_{p+q}-Q)=x^{3}-(p-2)x^{2}-(p+kq-1)x+kq(p-k-1).$$ Lemma 2.1 implies that $f_Q(x)$ is a divisor of $f_{K_{p,k}^{q}}(x).$ It's clear  that  $f_Q(-1)=qk(p-k)\neq 0$
and $f_Q(0)=kq(p-k-1)\neq 0.$ Hence $(x(x+1), f_Q(x))=1.$

Observe that $${\rm rank} A(K_{p,k}^{q}) =p+1,\; {\rm rank}\left(I_{p+q}+A(K_{p,k}^{q}) \right)=q+2.$$
Hence $f_{K_{p,k}^{q}}(x)$ has roots $0$ and $-1$  with multiplicities $p+q-{\rm rank} A(K_{p,k}^{q})=q-1$ and  $p+q-{\rm rank}\left(I_{p+q}+A(K_{p,k}^{q}) \right)=p-2$ respectively. Therefore the characteristic polynomial $f_{K_{p,k}^{q}}(x)$ of $K_{p,k}^{q}$ is given by

 $$f_{K_{p,k}^{q}}(x)=\left|xI-A(K_{p,k}^{q})\right|= x^{q-1}(x+1)^{p-2}[x^{3}-(p-2)x^{2}-(p+kq-1)x+kq(p-k-1)].$$
\qed\end{proof}

\

The class of graphs $\Omega$ was precisely described by Haemers in \cite{Hae} (ignoring isolated vertices) by the mixed extension. Let $\Omega^{*}$ denote the class of all connected graphs which have exactly two positive eigenvalues, one eigenvalue less than $-1$ and all other eigenvalues equal to either $-1$ or $0.$  It's clear that $\Omega^{*}$ is the subset of $\Omega.$ Notice that the generalized pineapple graph $K_{p,k}^{q}$ belongs to $\Omega^{*}$ and the determination of $\Omega^{*}$ could help us to find all graphs which are cospectral with $K_{p,k}^{q}.$ In \cite{Hae}, Haemers determined the class $\Omega^{*}$ by the following lemma.

\blem\label{lem2.6}{\rm(\cite{Hae})}
A graph $H$ belongs to $\Omega^{*}$ if and only if $H$ is one of the following.
  \begin{itemize}
  \item[{\rm(1)}] A mixed extension of $P_{3}$ of type $(-l,-m,n),$ $(-l,m,n),$ $(l,-m,n)$ or $(l,m,n)$ with $l,m \geqslant 1$ and $n \geqslant 2.$
  \item[{\rm(2)}] A mixed extension of $P_{4}$ of type $(l,-3,-2,-2),$ $(-2,m,n,-2)$ or $(l,-2,n,-3)$ with $l,m,n \geqslant 1.$
  \item[{\rm(3)}] A mixed extension of $P_{4}$ of type $(l,m,-n,s)$ with $n \geqslant 1$ and $(l,m,s)$ equals $(3,3,6),$ $(3,4,4),$ $(3,6,3),$ $(4,2,6),$ $(4,3,3),$ $(4,6,2),$ $(5,2,4),$ $(5,4,2),$ $(7,2,3),$ or $(7,3,2).$
  \item[{\rm(4)}] A mixed extension of $P_{4}$ of type $(l,m,n,s)$ with $(l,m,n,s)$ equal to $(2,2,2,7),$ $(2,2,3,4),$ $(2,2,6,3),$ $(2,3,2,5),$ $(2,3,4,3),$ $(2,5,2,4),$ $(2,5,3,3),$ or $(3,2,2,3).$
 \item[ {\rm(5)}] A mixed extension of $P_{5}$ of type $(1,l,-m,n,1)$ with $l,m,n \geqslant 1.$
 \end{itemize}
\elem

Since $K_{p,k}^{q}$ has exactly two positive eigenvalues, hence any graph $G$ with the same adjacency spectrum as $K_{p,k}^{q}$ must contain at most two components with at least one edge (shortly main component). Hence, we can determine the form of $G$ by the help of Lemmas \ref{lem2.3}, \ref{lem2.4} and \ref{lem2.6}. In the next section we will prove the main theorems by using the characteristic polynomials of graphs. For the characteristic polynomials of the graphs described in Lemma $\ref{lem2.6},$ we can get them by the use of the characteristic polynomial of the quotient matrix $A(G/\pi)$ with respect to the equitable partition $\pi,$ since all remaining eigenvalues are $0$ or $-1.$

\vskip 2mm

\section{\bf Main results}\label{3}

Let $K_{p,k}^{q}(p-2 \geqslant k \geqslant 1, q \geqslant 1)$ be a the generalized pineapple graph as introduced in section \S 2, and let $G$ be a graph with the same adjacency spectrum as $K_{p,k}^{q}.$ The graph $G$ has at most two components with at least one edge (shortly main components). First of all, we consider the case that $G$ has two main components with some isolated vertices.

\begin{thm}\label{thm3.1}
Let $K_{p,k}^{q}$ be a generalized pineapple graph with $p-2 \geqslant k \geqslant 1, q \geqslant 1,$
and let $G = G_{1} \bigcup G_{2} \bigcup a K_{1}$ be a graph which contains $a\geqslant 0$ isolated vertices and two main components $G_{1}$ and $G_{2}.$ Then $G$ is cospectral with $K_{p,k}^{q}$  if and only if
$G=K_{t }\bigcup {\rm CS}_{m,n} \bigcup aK_{1},$
where $a$ is a non-negative integer root of the equation \\
$kx^3 + [p - p^2 + (1 - q)k]x^2 + [(2q - 1)p^2 - 2pqk + (1 - 2q)p + 2qk^2 + (q - q^2)k]x + (k + 1 - q)p^2q + [(2q - 2)k -2k^2 + q - 1]pq + qk^3 + 2(1-q)qk^2 + (q^2 - 2q + 1)qk=0,$ \\
and $a<q-1,$ $p+q-k-1-a$ is a divisor of $(q-a)(p-k),$ furthermore,
$$\left\{\begin{array}{l}n=q-a,\\[5pt]
t=\dfrac{(q-a)(p-k)}{p+q-a-k-1},\\[10pt]
m=\dfrac{p^2-(k+1)p+kq-ka}{p+q-a-k-1}.\end{array}\right.$$
\end{thm}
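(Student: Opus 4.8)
The plan is to locate $G$ within Haemers' classification and then match characteristic polynomials. Since $G$ is cospectral with $K_{p,k}^{q}$, Lemma~\ref{lem2.5} shows that the eigenvalues of $G$ different from $0$ and $-1$ are exactly two positive numbers and one number less than $-1$, and that $|V(G)|=p+q$. Deleting the $a$ isolated vertices removes only copies of the eigenvalue $0$, so $G':=G_{1}\bigcup G_{2}$ has no isolated vertices and has exactly the same three eigenvalues different from $0$ and $-1$; hence $G'\in\Omega$. As $G'$ is disconnected with precisely two components, Lemma~\ref{lem2.3} forces $G'$ into case (2) or case (3) (case (1) has three components): either $G'=K_{u}\bigcup K_{v,w}$ or $G'=K_{u}\bigcup {\rm CS}_{v,w}$.

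First I would discard the complete bipartite case. Write $c(x)=x^{3}-(p-2)x^{2}-(p+kq-1)x+kq(p-k-1)$ for the cubic factor in Lemma~\ref{lem2.5}. If $G=K_{u}\bigcup K_{v,w}\bigcup aK_{1}$, the eigenvalue $-1$ occurs only inside $K_{u}$, and the three eigenvalues different from $0,-1$ are $u-1$ and $\pm\sqrt{vw}$, so $c(x)=(x-(u-1))(x^{2}-vw)$. Comparing the coefficient of $x^{2}$ gives $u-1=p-2$, and the remaining coefficients give $vw=p+kq-1$ and $(p-2)vw=kq(p-k-1)$. Eliminating $vw$ yields $(p-1)(p-2)=kq(1-k)$, which is impossible since the left side is positive (as $p\geqslant k+2\geqslant 3$) while the right side is at most $0$. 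Hence $G'=K_{u}\bigcup {\rm CS}_{v,w}$, which I write as $G=K_{t}\bigcup {\rm CS}_{m,n}\bigcup aK_{1}$, where $m$ is the size of the clique part and $n$ the size of the coclique part of the split component; by Lemma~\ref{lem2.2} its characteristic polynomial is $(x+1)^{m-1}x^{n-1}(x^{2}-(m-1)x-mn)$.

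Now I would read off the parameters. The split component contributes $0$ with multiplicity $n-1$ and $-1$ with multiplicity $m-1$, so matching multiplicities gives $n-1+a=q-1$ and $(t-1)+(m-1)=p-2$, that is $n=q-a$ and $t+m=p$; since the split component has an eigenvalue below $-1$ only when $n\geqslant 2$, we also need $a<q-1$. Matching the cubic factor as $c(x)=(x-(t-1))(x^{2}-(m-1)x-mn)$ and comparing the coefficient of $x$ gives, after simplification, the clean identity $(t-p)(t-n)=kq$, while comparing the constant term gives $(t-1)mn=kq(p-k-1)$. Substituting $kq=(p-t)(n-t)=m(n-t)$ from the first identity into the second and cancelling $m$ reduces both relations to the single linear equation $(t-1)n=(n-t)(p-k-1)$, which solves to
$$t=\frac{(q-a)(p-k)}{p+q-a-k-1},\qquad m=p-t=\frac{p^{2}-(k+1)p+kq-ka}{p+q-a-k-1}.$$
In particular $t$, and hence $m$, is an integer exactly when $p+q-k-1-a$ divides $(q-a)(p-k)$, which is the stated divisibility condition.

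Finally, reinserting the formula for $t$ into the surviving identity $(t-p)(t-n)=kq$ clears the denominator $(p+q-a-k-1)^{2}$ and collapses to a single equation in $a$; a direct expansion shows this is exactly the displayed cubic, so every such $G$ arises from a non-negative integer root $a$ of that cubic. For the converse I would reverse the computation: given a non-negative integer $a<q-1$ solving the cubic with $p+q-k-1-a\mid(q-a)(p-k)$, define $t,m,n$ by the formulas, check that they are positive integers with $n\geqslant 2$ (and $t\geqslant 2$, $m\geqslant 1$), and use Lemma~\ref{lem2.2} to verify that $K_{t}\bigcup {\rm CS}_{m,n}\bigcup aK_{1}$ has characteristic polynomial equal to $f_{K_{p,k}^{q}}$. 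I expect the main labor to lie in this last paragraph, namely expanding $(t-p)(t-n)=kq$ into the explicit cubic and confirming that the listed positivity and integrality constraints are exactly the right ones; by contrast, the reduction via Lemmas~\ref{lem2.3} and~\ref{lem2.5} and the simplification to $(t-p)(t-n)=kq$ that makes $t$ linear are the conceptually decisive but comparatively painless steps.
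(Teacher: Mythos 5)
Your proposal follows essentially the same route as the paper's proof: reduce to Lemma \ref{lem2.3} via Lemma \ref{lem2.5}, eliminate the $K_c\bigcup K_{d,e}$ possibility by a coefficient comparison (your contradiction $(p-1)(p-2)=kq(1-k)$ is precisely the paper's $k(a+d+e-1)(k-1)+c^{2}-c=0$ after substituting $q=a+d+e-1$ and $c=p-1$), then match multiplicities and the cubic factor in the $K_t\bigcup{\rm CS}_{m,n}$ case. Your system $n=q-a$, $t+m=p$, $m(n-t)=kq$, $(t-1)mn=kq(p-k-1)$, the resulting formulas for $t,m,n$, and the divisibility condition all coincide with the paper's.

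The one step you assert without computing is the last one: that substituting $t=\frac{(q-a)(p-k)}{p+q-a-k-1}$ back into $m(n-t)=kq$ ``collapses to exactly the displayed cubic.'' It does not. Carrying out the elimination, the constraint on $a$ is
$$\bigl(p^{2}-(k+1)p+kq-ka\bigr)(q-a)(q-a-1)=kq\,(p+q-a-k-1)^{2},$$
whose expansion differs from the cubic displayed in the theorem by the term $k(p-q)(a-q)(a-q+1)$. You can test this on the cospectral pair supplied by Corollary \ref{cor3.2} with $a=3$, i.e.\ $p=10$, $q=7$, $k=1$, $G=K_{3}\bigcup{\rm CS}_{7,4}\bigcup 3K_{1}$: the two graphs are genuinely cospectral and the identity above holds ($84\cdot12=7\cdot144$), yet the displayed cubic evaluates to $-36\neq0$ at $x=3$. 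So the defect lies in the stated polynomial rather than in your method, but as written your proof certifies an identity that is false; you need to either perform the expansion explicitly and record the cubic you actually obtain, or keep the constraint in the factored form above. Note also that the cubic constraint cannot be dropped in favour of the divisibility condition alone: for $K_{11,1}^{56}$ the value $a=20$ passes the integrality test ($t=8$, $m=3$, $n=36$) but fails $m(n-t)=kq$, and indeed $K_{8}\bigcup{\rm CS}_{3,36}\bigcup20K_{1}$ is not cospectral with $K_{11,1}^{56}$ (the correct mate is $K_{7}\bigcup{\rm CS}_{4,21}\bigcup35K_{1}$).
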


\begin{proof} \ By Lemma \ref{lem2.5}, $K_{p,k}^q,$ and so $G,$ hence
$G_1\bigcup G_2$ has exactly three eigenvalues different from $-1$ and $0.$
By Lemma \ref{lem2.3}, $G_{1} \bigcup G_{2}$ is either of the form $K_{c}\bigcup K_{d,e}$ or $K_{t}\bigcup {\rm CS}_{m,n}$ for some $c, d, e,t,m\geqslant 2$ and $n\geqslant 1.$

{\bf Case (1).}  $G_{1} \bigcup G_{2}$ is of the form $K_{c}\bigcup K_{d,e}$ for some $c, d, e \geqslant 2.$

It is easy to see that the characteristic polynomial of $K_c\;\cup\; K_{d,e}\;\cup\; aK_1$ is
\begin{eqnarray}\label{thm3.1-1}\begin{array}{ll} f_{K_c\;\cup\; K_{d,e}\;\cup\; aK_1}(x) &= (x+1)^{c-1}[x-(c-1)]\cdot x^{d+e-2}(x^2-de)\cdot x^a \\ &=x^{a+d+e-2}(x+1)^{c-1}\left[x^{3}-(c-1)x^{2}-dex+(c-1)de\right].\end{array}\end{eqnarray}
 Put $g(x)=x^{3}-(c-1)x^{2}-dex+(c-1)de.$ Then $g(0)=(c-1)de\neq 0$ and $g(-1)=de(c-2)-c\neq 0$ since $c,d,e\geqslant 2.$ Hence $(x(x+1),g(x))=1.$
By comparing the coefficients and degrees of  polynomials \eqref{lem2.5-1} and \eqref{thm3.1-1}, we have
\begin{eqnarray}\label{3.1}\left\{\begin{array}{l}
      q-1 = a+d+e-2, \\
      p-2 = c-1, \\
      p+kq-1 = de, \\
      kq(p-k-1) = (c-1)de.
  \end{array}\right.\end{eqnarray}

From the first two equations of \eqref{3.1}, we obtain $q = a+d+e-1, p = c+1.$ Substitute $p, q$ into the last two equations of \eqref{3.1}, we have
\begin{eqnarray}\label{3.2}\left\{\begin{array}{l} k(a+d+e-1)+c = de,\\
      k(a+d+e-1)(c-k) = (c-1)de.
   \end{array}\right.\end{eqnarray}
Substitute the first equation  into the second equation of \eqref{3.2} to eliminate $de,$ we obtain $k(a+d+e-1)(k-1)+c^{2}-c = 0.$ It's impossible since $d,e,c\geqslant 2$, $k\geqslant 1$ and $a\geqslant 0.$

{\bf Case (2).} $G_{1} \bigcup G_{2}$ is of the form $K_{t}\bigcup {\rm CS}_{m,n}$ for some $t, m \geqslant 2$ and $n \geqslant 1.$

The characteristic polynomial $f_{{\rm CS}_{m,n}}(x)$ is given by a product of coprime polynomials  as follows
\begin{eqnarray}\label{thm3.1-2-1}f_{{\rm CS}_{m,n}}(x)=\left\{\begin{array}{ll}x^{n-1} (x+1)^{m-1}[x^2-(m-1)x-mn],&\text{if}\;n\geqslant 2,\\
 (x+1)^{m}(x-m),&\text{if}\;n=1.\end{array}\right.\end{eqnarray}
If $n=1,$ then
$$f_{G}(x) = f_{K_t\;\bigcup\;{\rm CS}_{m,n}\;\bigcup\;aK_1}(x) = x^{a}(x+1)^{m+t-1}[x^{2}-(m+t-1)x+(t-1)m]$$
has only two roots different from $0$ and $-1,$   thus $G$ is unable cospectral with $K_{p,k}^{q}.$ Hence, if $G = K_t\;\bigcup\;{\rm CS}_{m,n}\;\bigcup\;aK_1$ is cospectral with $K_{p,k}^{q},$ then  $n\geqslant 2$ and by  comparing the coefficients and degrees of  polynomials \eqref{lem2.5-1} and \eqref{thm3.1-2-1}, we have
\begin{align}\label{3.1-2}
   \begin{cases}
      q-1 = a+n-1, \\
      p-2 = m+t-2, \\
      p+kq-1 = mn-mt+m+t-1, \\
      kq(p-k-1) = (t-1)mn.
   \end{cases}
\end{align}
{\sl i.e.},
\begin{align}
   \begin{cases}\label{3.1-22}
      n =q- a, \\
       m+t=p, \\
       m(n-t)=kq, \\
      (p+q-a-k-1)t=(q-a)(p-k).
   \end{cases}
\end{align}
By calculating the system of equations \eqref{3.1-22}, we obtain that
$$\left\{\begin{array}{l}n=q-a,\\[5pt]
t=\dfrac{(q-a)(p-k)}{p+q-a-k-1},\\[10pt]
m=\dfrac{p^2-(k+1)p+kq-ka}{p+q-a-k-1}.\end{array}\right.  $$
and $a$ is a non-negative integer root of the equation \\
$kx^3 + [p - p^2 + (1 - q)k]x^2 + [(2q - 1)p^2 - 2pqk + (1 - 2q)p + 2qk^2 + (q - q^2)k]x + (k + 1 - q
)p^2q + [(2q - 2)k -2k^2 + q - 1]pq + qk^3 + 2(1-q)qk^2 + (q^2 - 2q + 1)qk=0.$
\qed\end{proof}

\begin{cor}\label{cor3.2}
Let $a \geqslant 3$ be an odd integer. Set $p=\frac{7a-1}{2},$ $q=\frac{5a-1}{2},$ $k=\frac{a-1}{2}.$ Then $K_{p,k}^{q}$ is non-DAS.

\begin{proof} Set  $t=a,$ $m=q=\frac{5a-1}{2},$ $n=\frac{3a-1}{2}$ are positive integers
and $G=K_{t}\cup CS_{m,n}\cup aK_{1}.$  By Theorem \ref{thm3.1}, it's  easy to see that $G$ is cospectral  with  $K_{p,k}^{q},$ $i.e.$ $f_{G}(x)= f_{K_{p,k}^{q}}(x).$ Hence  $K_{p,k}^{q}$ is non-DAS.
\qed \end{proof}
\end{cor}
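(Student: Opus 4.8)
The plan is to prove non-DAS by exhibiting, for each odd $a\geqslant 3$, an explicit graph that is cospectral with but not isomorphic to $K_{p,k}^{q}$, and the natural source for such a mate is Theorem \ref{thm3.1}: that theorem already classifies the disconnected cospectral partners of $K_{p,k}^{q}$, so it suffices to check that its hypotheses hold for the stated choice of $(p,q,k)$ with $a$ isolated vertices. Thus the whole argument reduces to verifying the conditions appearing in the statement of Theorem \ref{thm3.1} and then reading off the resulting graph $G=K_{t}\bigcup{\rm CS}_{m,n}\bigcup aK_{1}$.

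First I would record the elementary arithmetic. Since $a$ is odd, $a-1$, $7a-1$ and $5a-1$ are all even, so $k=\tfrac{a-1}{2}$, $p=\tfrac{7a-1}{2}$ and $q=\tfrac{5a-1}{2}$ are indeed positive integers; a one-line check gives $p-2=\tfrac{7a-5}{2}\geqslant\tfrac{a-1}{2}=k\geqslant1$ and $q\geqslant1$, so $(p,q,k)$ is an admissible parameter triple. I would then compute the three quantities that drive Theorem \ref{thm3.1}, namely $p-k=3a$, $q-a=\tfrac{3a-1}{2}$, and $p+q-k-1-a=\tfrac{3(3a-1)}{2}$, and verify $a<q-1=\tfrac{5a-3}{2}$, which holds for all $a\geqslant3$.

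Next I would verify the divisibility and integrality conditions. From the formulas in Theorem \ref{thm3.1}, $t=\dfrac{(q-a)(p-k)}{p+q-k-1-a}=\dfrac{3a(3a-1)/2}{3(3a-1)/2}=a$, so $p+q-k-1-a$ divides $(q-a)(p-k)$ with quotient $a$; then $m=p-t=\tfrac{5a-1}{2}=q$ and $n=q-a=\tfrac{3a-1}{2}$. For $a\geqslant3$ these satisfy $t\geqslant2$, $m\geqslant2$ and $n\geqslant2$, so $G$ is a legitimate graph of the form in the theorem. It remains to confirm that $a$ is a root of the cubic in the hypothesis. Rather than expand the displayed cubic directly, I would check the structural relations it encodes, namely $m+t=p$, $m(n-t)=kq$, and $(p+q-k-1-a)t=(q-a)(p-k)$, from which the cubic follows by the same elimination used to derive it; the middle relation is immediate since $n-t=\tfrac{3a-1}{2}-a=\tfrac{a-1}{2}=k$ gives $m(n-t)=qk=kq$. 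This is the only computational step with any bulk, and it is the place I would be most careful.

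Finally, Theorem \ref{thm3.1} then yields $f_{G}(x)=f_{K_{p,k}^{q}}(x)$, i.e.\ $G\simeq K_{p,k}^{q}$. Since $G=K_{a}\bigcup{\rm CS}_{m,n}\bigcup aK_{1}$ is disconnected (it has the component $K_{a}$ together with $a\geqslant3$ isolated vertices) whereas $K_{p,k}^{q}$ is connected, the two graphs are non-isomorphic, so $K_{p,k}^{q}$ is non-DAS. The genuine difficulty here is not the verification but the discovery of the one-parameter family $(p,q,k)=(\tfrac{7a-1}{2},\tfrac{5a-1}{2},\tfrac{a-1}{2})$: one has to reverse-engineer, from the divisibility requirement $p+q-k-1-a\mid(q-a)(p-k)$ together with the cubic, a choice of parameters that forces an integer solution $(t,m,n)$, and the chosen family is precisely engineered so that the awkward denominator $p+q-k-1-a$ cancels.
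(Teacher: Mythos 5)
Your proposal is correct and follows the same route as the paper: both exhibit the explicit disconnected cospectral mate $G=K_{a}\bigcup{\rm CS}_{q,(3a-1)/2}\bigcup aK_{1}$ and appeal to Theorem \ref{thm3.1}. The paper's own proof is essentially a one-line "by Theorem \ref{thm3.1} it's easy to see"; you supply the parameter verifications (which all check out, including the shortcut of verifying the system $m+t=p$, $m(n-t)=kq$, $(p+q-k-1-a)t=(q-a)(p-k)$ in place of the displayed cubic) and the non-isomorphism via connectedness, which the paper leaves implicit.
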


Next, we consider the graphs $G$ which is cospectral with $K_{p,k}^{q}$ and has one main component and some isolated vertices.

\begin{thm}\label{thm3.3}
Let $K_{p,k}^{q}$ be a generalized pineapple graph with $p-2 \geqslant k \geqslant 1, q \geqslant 1,$
and let $G = G_{1} \bigcup a K_{1}$ be a graph which contains $a\geqslant 0$ isolated vertices and a main component $G_{1}.$ Then $G$ is cospectral with $K_{p,k}^{q}$ if and only if G is one of the following graphs for some $l, m, n \in \mathbb{Z}^{+}.$
\begin{itemize}
 \item[{\rm(1)}] $G_{1}$ is a mixed extension of $P_{3}$ of type $(-l,-m,n),$ where
    $$\left\{\begin{array}{l}
    l=\frac{kq(p-1)(p-k-1)}{k(k-1)q+(p-1)(p-2)},\\[6pt]
     m=\frac{k(k-1)q}{(p-1)(p-2)}+1, \\[6pt]
     n= p-1,
     \end{array}\right. $$
     and $a=q-\frac{k(k-1)q}{(p-1)(p-2)}-\frac{kq(p-1)(p-k-1)}{k(k-1)q+(p-1)(p-2)}.$ \\
 \item[{\rm(2)}] $G_{1}$ is a mixed extension of $P_{3}$ of type $(l,-m,n),$ where
    $$\left\{\begin{array}{l}
    l=\frac{1}{2}\left(p - \sqrt{p^{2}-p+2kq-\sqrt{(p+2kq)^{2}+8kpq(p-k-1)}}\right),\\[5pt]
    m=\frac{1}{4p}\left(p+2kq+\sqrt{(p+2kq)^{2}+8kpq(p-k-1)}\right), \\[5pt]
    n=\frac{1}{2}\left(p + \sqrt{p^{2}-p+2kq-\sqrt{(p+2kq)^{2}+8kpq(p-k-1)}}\right),
    \end{array}\right. $$
    and $a=\frac{1}{4p}\left(4pq-p-2kq-\sqrt{(p+2kq)^{2}+8kpq(p-k-1)}\right).$ \\
  \item[{\rm(3)}] $G_{1}$ is a mixed extension of $P_{3}$ of type $(l,m,n),$ where
    $$\left\{\begin{array}{l}
    l= \frac{1}{2}\left(p-m+1 - \sqrt{(p-m+1)^{2}-4(p-kq)}\right),\\[5pt]
    m= \frac{kq(p-k)}{p-kq}, \\[5pt]
    n= \frac{1}{2}\left(p-m+1 + \sqrt{(p-m+1)^{2}-4(p-kq)}\right),
     \end{array}\right. $$
    and $a= q-1.$
  \item[{\rm(4)}] $G_{1}$ is a mixed extension of $P_{4}$ of type $(l,-3,-2,-2),$ where $l =p-2,$ and $a =q-5.$ And, $k, p, q$ satisfy
    $$\left\{\begin{array}{l}
    kq = p+7, \\
    p^{2}-(k+6)p-7k+17=0.
     \end{array}\right. $$

 \item[{\rm(5)}] $G_{1}$ is a mixed extension of $P_{4}$ of type $(-2,m,n,-2),$ where
    $$\left\{\begin{array}{l}
    m= \frac{k}{2}\left(q + \sqrt{q}\right),\\
    n= \frac{k}{2}\left(q - \sqrt{q}\right),
     \end{array}\right. $$
    and $a= q-3.$ And, $k, p, q$ satisfy
    $$\left\{\begin{array}{l}
    kq = p-1,\\
     q = b^{2}, b \in \mathbb{Z}^{+} \setminus \{1\}.
     \end{array}\right. $$

  \item[{\rm(6)}] $G_{1}$ is a mixed extension of $P_{4}$ of type $(l,-2,n,-3),$ where
  $$\left\{\begin{array}{l}
    n= \frac{1}{18}\left[kq(p-k-1) +6(kq-p+1)\right], \\[10pt]
    l= \frac{3kq(p-k-1)}{kq(p-k+5)-6(p-1)},
    \end{array}\right. $$
   and $a= q-4.$ And, $k, p, q$ satisfy the equation \\ $k^{4}q^{2}-(2p+10)q^{2}k^{3}+[(p^{2}+10p+25)q^{2}+(30p-84)q]k^{2}-(30p^{2}+66p-96)kq+ \\
   144p^{2}+288p+144=0.$

  \item[{\rm(7)}] $G_{1}$ is a mixed extension of $P_{4}$ of type $(l,m,-n,s),$ where
   \begin{enumerate}
   \item[{\rm(\romannumeral1)}] $k =1,$  $a\in\{27, 63\}$ and $$(l,m,-n,s)=\left\{\begin{array}{lllll}(3,6,-20,3),&\text{if}\;a=63,&p=11, &k=1, &q=84,\\
   (4,3,-8,3),&\text{if}\;a=27,&p=9,&k=1,&q=36.\end{array}\right.$$
   \item[{\rm(\romannumeral2)}] $k=2,$ $a\in\{11,17,21,26,35,63\}$ and
   $$(l,m,-n,s)=\left\{\begin{array}{lllll}(3,4,-18,4),& \text{if} \;a=26, &p=10,&k=2,&q=45,\\
   (3,6,-48,3),& \text{if}\; a=63,&p=11,&k=2,&q=112,\\
   (4,3,-14,3),& \text{if} \;a=21,&p=9,&k=2,&q=36,\\
   (4,6,-27,2),& \text{if} \;a=35,&p=11,&k=2,&q=63,\\
   (5,2,-9,4),& \text{if}\; a=17,&p=10,&k=2,&q=27,\\
   (5,4,-8,2),& \text{if} \;a=11,&p=10,&k=2,&q=20.
   \end{array}\right.$$
   \item[{\rm(\romannumeral3)}] $k=3,$ $(l,m,-n,s)=(3,4,-63,4)$ and $a=56, p=10, k=3,q=120.$ \\
   \end{enumerate}
  \item[{\rm(8)}] $G_{1}$ is a mixed extension of $P_{5}$ of type $(1,l,-m,n,1)$\ $(l,n\geqslant 2),$ where
   $$\left\{\begin{array}{l}
    l= \frac{1}{2}\left(p-1 - \sqrt{(p-1)^{2}+2kq+p-1-s}\right),\\[5pt]
    m= \frac{1}{4(p-1)}(2kq-p+1+s), \\[5pt]
    n= \frac{1}{2}\left(p-1 + \sqrt{(p-1)^{2}+2kq+p-1-s}\right) , \\[5pt]
    s= \sqrt{(2kq-p+1)^{2}+8kq(p-1)(p-k)},
     \end{array}\right. $$
    and $a= \frac{1}{4(p-1)}(4pq-2(k+2)q-3(p-1)-s).$
\end{itemize}
\end{thm}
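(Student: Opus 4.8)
The key structural input is Lemma \ref{lem2.5}: since $K_{p,k}^q$ has exactly two positive eigenvalues and exactly one eigenvalue below $-1$, any cospectral $G$ shares this signature. If $G = G_1 \cup aK_1$ has a single main component $G_1$, then $G_1$ itself lies in $\Omega^*$ (it is connected, has no isolated vertices, and inherits the two-positive/one-below-$-1$/rest-in-$\{-1,0\}$ eigenvalue pattern from $G$, with the isolated vertices only contributing extra zero eigenvalues). Lemma \ref{lem2.6} therefore gives a complete finite list of candidate types for $G_1$: the four $P_3$-extensions, the three generic $P_4$-extensions in families (2)--(4), the sporadic $P_4$-extensions of family (3), and the $P_5$-extension of family (5). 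My plan is to treat each of these eight case-families in turn, in exactly the order they appear in the statement, and to match characteristic polynomials.

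\textbf{The matching mechanism.} For each candidate type I would first write down $f_{G}(x) = f_{G_1}(x)\cdot x^{a}$, computing $f_{G_1}$ via the quotient matrix of the equitable partition coming from the mixed-extension structure (as the excerpt advises at the end of Section \ref{2}): since all eigenvalues outside the quotient are forced to be $0$ or $-1$, the characteristic polynomial factors as a power of $x$, a power of $(x+1)$, and the characteristic polynomial of the (small, $3\times 3$ for $P_3$ and $P_5$, $4\times 4$ for $P_4$) quotient matrix. Setting this equal to the factored form \eqref{lem2.5-1} and equating the multiplicities of the roots $0$ and $-1$ together with the three coefficients of the nontrivial cubic factor yields, in each case, a small polynomial system in the type parameters $l,m,n,s$ (and $a$) in terms of $p,k,q$. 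Solving that system is routine linear or quadratic algebra; for the $P_3$ cases (1)--(3) it produces the closed forms displayed, for cases (4)--(6) it forces the stated Diophantine constraints on $p,k,q$ plus the integrality/square conditions, and for case (8) the $P_5$ quotient gives the quadratic-in-a-radical expressions shown.

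\textbf{The sporadic family (7).} The genuinely discrete part is family (3) of Lemma \ref{lem2.6}, where the triple $(l,m,s)$ is restricted to ten fixed values. For each of those ten triples I would impose the coefficient-matching equations; because $l,m,s$ are now fixed numbers, the system collapses to integrality constraints tying $k$ (small), $q$, and $p$ together, together with the requirement $a \geq 0$. Sieving through the finitely many admissible $(k,p,q)$ that make $n$ and $a$ nonnegative integers leaves precisely the enumerated solutions listed under (i)--(iii). This bookkeeping—checking which of the ten fixed triples admit an integral, nonnegative, spectrum-compatible solution—is where I expect the main obstacle to lie: the algebra in each individual case is elementary, but one must verify that no admissible triple has been overlooked and that each surviving tuple genuinely reproduces the target cubic, so the difficulty is entirely one of exhaustive and error-free case analysis rather than any single hard idea.

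\textbf{Converse direction.} For the ``if'' direction in every case, one simply substitutes the displayed formulas for $l,m,n,s,a$ back into $f_{G_1}(x)\cdot x^a$ and checks that the product equals \eqref{lem2.5-1}; this is a direct verification that the chosen parameters satisfy the coefficient-matching system, and it closes the equivalence. The only subtlety is confirming that the stated formulas indeed yield positive integers under the accompanying side conditions (for instance the divisibility and square conditions in (5) and (6)), which is exactly what those conditions are engineered to guarantee.
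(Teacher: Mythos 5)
Your plan is essentially the paper's proof: place $G_1$ in $\Omega^*$, run through the candidate types of Lemma \ref{lem2.6}, compute each characteristic polynomial via the quotient matrix, and match the multiplicities of $0$ and $-1$ together with the coefficients of the cubic factor against \eqref{lem2.5-1}. Two small bookkeeping points to fold in: the $P_3$-type $(-l,m,n)$ must also be worked through (it forces $G\cong K_{p,k}^{q}$ itself, which is why it is absent from the statement), and the eight fixed all-positive $P_4$-tuples in item (4) of Lemma \ref{lem2.6} must be explicitly eliminated, as the paper does in its Case 2(e).
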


\begin{proof} \ It's clear that $G_{1} \in \Omega^{*}.$ Hence we need to consider all the cases of Lemma 2.6. Let $l, m, n \in \mathbb{Z}^{+}.$ Now, we can obtain the characteristic polynomials of the mixed extensions of $P_{3},$ $P_{4}$ and $P_{5}$ by the use of Lemma 2.1, and we next discuss them in the following cases, respectively.

\textbf{Case 1:} First of all, we examine the mixed extensions of $P_{3}.$

\textbf{(a)} Let $G_{1}$ be a mixed extension of $P_{3}$ of type $(-l,-m,n)$ with $l\geqslant 1,$ $m\geqslant 1,$ $n\geqslant 2.$   Then the characteristic polynomial of $G = G_{1} \bigcup a K_{1}$ is given by a product of coprime polynomials  as follows
\begin{eqnarray}\label{thm3.3-1-1} f_{G = G_{1} \bigcup a K_{1}}(x)=x^{a+l+m-2}(x+1)^{n-1}[x^{3}-(n-1)x^{2}-(lm+mn)x+lmn-lm].\end{eqnarray}
Hence, by \eqref{lem2.5-1} and \eqref{thm3.3-1-1},  we obtain that $G$ is cospectral with $K_{p,k}^{q}$ if and only if
\begin{align}
   \begin{cases}\label{3.3-1a1}
      q-1 = a+l+m-2, \\
      p-2 = n-1, \\
      p+kq-1 = lm+mn, \\
      kq(p-k-1) = lmn-lm,
   \end{cases}
\end{align}
{\sl i.e.},
\begin{align}
   \begin{cases}\label{3.3-1a2}
      m+l+a =q+1, \\
      n = p-1, \\
      lm+mn = p+kq-1, \\
      lmn-lm = kq(p-k-1).
   \end{cases}
\end{align}
By calculating the system of equations \eqref{3.3-1a2}, we obtain that
\begin{align*}
   \begin{cases}
    a=q-\frac{k(k-1)q}{(p-1)(p-2)}-\frac{kq(p-1)(p-k-1)}{k(k-1)q+(p-1)(p-2)}, \\[5pt]
    m=\frac{k(k-1)q}{(p-1)(p-2)}+1, \\[5pt]
    n= p-1, \\[5pt]
    l=\frac{kq(p-1)(p-k-1)}{k(k-1)q+(p-1)(p-2)}. \\[5pt]
   \end{cases}
\end{align*}

\textbf{(b)} Let $G_{1}$ be a mixed extension of $P_{3}$ of type $(l,-m,n)$ with  $l\geqslant 1,$ $m\geqslant 1,$ $n\geqslant 2.$
 Then the characteristic polynomial of $G = G_{1} \bigcup a K_{1}$ is given by a product of coprime polynomials  as follows
\begin{eqnarray}\label{thm3.3-1-2} \begin{array}{ll}f_{G }(x)=&x^{a+m-1}(x+1)^{l+n-2}
\cdot\left[x^{3}-(l+n-2)x^{2}\right.\\&\left.-(lm+mn-ln+l+n-1)x+2lmn-lm-mn\right].\end{array}
\end{eqnarray}
Hence,  by \eqref{lem2.5-1} and \eqref{thm3.3-1-2},  we obtain that $G$ is cospectral with $K_{p,k}^{q}$ if and only if
\begin{align}
   \begin{cases}\label{3.3-1b1}
      q-1 = a+m-1, \\
      p-2 = l+n-2, \\
      p+kq-1 = lm+mn-ln+l+n-1, \\
      kq(p-k-1) = 2lmn-lm-mn.
   \end{cases}
\end{align}
{\sl i.e.},
\begin{align}
   \begin{cases}\label{3.3-1b2}
      m+a =q , \\
      l+n =p, \\
      lm+mn-ln=kq, \\
      2lmn-lm-mn=kq(p-k-1).
   \end{cases}
\end{align}
Note that $l$ and $n$ are symmetrical, by calculating the system of equations \eqref{3.3-1b2},  we obtain that
\begin{align*}
   \begin{cases}
    a=\frac{1}{4p}\left(4pq-p-2kq-\sqrt{(p+2kq)^{2}+8kpq(p-k-1)}\right), \\[5pt]
    m=\frac{1}{4p}\left(p+2kq+\sqrt{(p+2kq)^{2}+8kpq(p-k-1)}\right), \\[7pt]
    n=\frac{1}{2}\left(p + \sqrt{p^{2}-p+2kq-\sqrt{(p+2kq)^{2}+8kpq(p-k-1)}}\right), \\[8pt]
    l=\frac{1}{2}\left(p - \sqrt{p^{2}-p+2kq-\sqrt{(p+2kq)^{2}+8kpq(p-k-1)}}\right). \\[8pt]
   \end{cases}
\end{align*}

\textbf{(c)} Let $G_{1}$ be a mixed extension of $P_{3}$ of type $(-l,m,n)$ with $l\geqslant 1,$ $m\geqslant 1,$ $n\geqslant 2.$   Then the characteristic polynomial of $G = G_{1} \bigcup a K_{1}$ is given by a product of coprime polynomials  as follows
\begin{eqnarray}\label{thm3.3-1-3} \begin{array}{ll}f_{G }(x)=& x^{a+l-1}(x+1)^{m+n-2}\\
&\cdot\left[x^{3}-(m+n-2)x^{2}-(lm+m+n-1)x+lmn-lm\right].\end{array}
\end{eqnarray}
Hence,  by \eqref{lem2.5-1} and \eqref{thm3.3-1-3},  we obtain that $G$ is cospectral with $K_{p,k}^{q}$ if and only if
\begin{align}
   \begin{cases}\label{3.3-1c1}
      q-1 = a+l-1, \\
      p-2 = m+n-2, \\
      p+kq-1 = lm+m+n-1, \\
      kq(p-k-1) = lmn-lm.
   \end{cases}
\end{align}
{\sl i.e.},
\begin{align}
   \begin{cases}\label{3.3-1c2}
      l+a = q , \\
      m+n = p, \\
      lm = kq, \\
      lmn-lm = kq(p-k-1).
   \end{cases}
\end{align}
By calculating the system of equations \eqref{3.3-1c2}, we get $m=k, l=q, a=0, n=p-k,$ and so $G\cong K_{p,k}^{q}.$

\textbf{(d)} Let $G_{1}$ be a mixed extension of $P_{3}$ of type $(l,m,n)$ with $l\geqslant 1,$ $m\geqslant 1,$ $n\geqslant 2.$   Then the characteristic polynomial of $G = G_{1} \bigcup a K_{1}$ is given by a product of coprime polynomials  as follows
\begin{eqnarray}\label{thm3.3-1-4} \begin{array}{ll}f_{G }(x)=&x^{a}(x+1)^{l+m+n-3}\cdot\left[x^{3}-(l+m+n-3)x^{2}\right.\\
&\left.-(2m+2n+2l-ln-3)x+lmn+ln-l-m-n+1\right].\end{array}
\end{eqnarray}
Hence,  by \eqref{lem2.5-1} and \eqref{thm3.3-1-4},  we obtain that $G$ is cospectral with $K_{p,k}^{q}$ if and only if
\begin{align}
   \begin{cases}\label{3.3-1d1}
     q-1 = a, \\
     p-2 = l+m+n-3, \\
     p+kq-1 = 2m+2n+2l-ln-3, \\
     kq(p-k-1) = lmn+ln-l-m-n+1.
   \end{cases}
\end{align}
{\sl i.e.},
\begin{align}
   \begin{cases}\label{3.3-1d2}
      a = q-1, \\
      l+m+n=p+1, \\
      ln=p-kq, \\
      lmn+ln=kq(p-k-1)+p.
   \end{cases}
\end{align}
By calculating the system of equations \eqref{3.3-1d2} and $l$ is symmetric with $n,$ we obtain that
\begin{align*}
   \begin{cases}
    a= q-1, \\
    m= \frac{kq(p-k)}{p-kq}, \\[5pt]
    n= \frac{1}{2}\left(p-m+1 + \sqrt{(p-m+1)^{2}-4(p-kq)}\right), \\[5pt]
    l= \frac{1}{2}\left(p-m+1 - \sqrt{(p-m+1)^{2}-4(p-kq)}\right).
   \end{cases}
\end{align*}

\textbf{Case 2:} Assume $G_{1}$ is the mixed extensions of $P_{4}.$

\textbf{(a)} Let $G_{1}$ be a mixed extension of $P_{4}$ of type $(l,-3,-2,-2)$ with  $l\geqslant 1.$   Then the characteristic polynomial of $G = G_{1} \bigcup a K_{1}$ is given by a product of coprime polynomials  as follows
\begin{eqnarray}\label{thm3.3-2-1} f_{G }(x)=x^{a+4}(x+1)^{l}[x^{3}-lx^{2}-(2l+10)x+12l]
\end{eqnarray}
Hence,  by \eqref{lem2.5-1} and \eqref{thm3.3-2-1},  we obtain that $G$ is cospectral with $K_{p,k}^{q}$ if and only if
\begin{align}
   \begin{cases}\label{3.3-2a1}
     q-1 = a+4, \\
     p-2 = l, \\
     p+kq-1 = 2l+10, \\
     kq(p-k-1) = 12l.
   \end{cases}
\end{align}
{\sl i.e.},
\begin{align}
   \begin{cases}\label{3.3-2a2}
      a =q-5 , \\
      l = p-2, \\
      l = kq-9, \\
      12l = kq(p-k-1).
   \end{cases}
\end{align}

By calculating the system of equations \eqref{3.3-2a2}, we obtain that
$$\left\{\begin{array}{l}
      a =q-5, \\
      l =p-2,\\
      kq =p+7,\\
      p^{2}-(k+6)p-7k+17=0.
\end{array}\right. $$

\textbf{(b)} Let $G_{1}$ be a mixed extension of $P_{4}$ of type $(-2,m,n,-2)$ with $m\geqslant 1, n\geqslant 1.$
Then the characteristic polynomial of $G = G_{1} \bigcup a K_{1}$ is given by a product of coprime polynomials  as follows
\begin{eqnarray}\label{thm3.3-2-2} f_{G }(x)=x^{a+2}(x+1)^{m+n-1}[x^{3}-(m+n-1)x^{2}-(2m+2n)x+4mn]
\end{eqnarray}
Hence,  by \eqref{lem2.5-1} and \eqref{thm3.3-2-2},  we obtain that $G$ is cospectral with $K_{p,k}^{q}$ if and only if
\begin{align}
   \begin{cases}\label{3.3-2b1}
     q-1 = a+2, \\
     p-2 = m+n-1, \\
     p+kq-1 = 2m+2n, \\
     kq(p-k-1) = 4mn,
   \end{cases}
\end{align}
{\sl i.e.},
\begin{align}
   \begin{cases}\label{3.3-2b2}
      a =q-3 , \\
      m+n=p-1, \\
      m+n=kq, \\
      4mn=kq(p-k-1).
   \end{cases}
\end{align}
Note that $m$ and $n$ are symmetrical, by calculating the system of equations \eqref{3.3-2b2}, we have
\begin{align*}
   \begin{cases}
      a= q-3, \\
      kq= p-1,\\
      m= \frac{k}{2}(q + \sqrt{q}),\\
      n= \frac{k}{2}(q - \sqrt{q}),
   \end{cases}
\end{align*}
where $q=b^2$ for some non-zero integer $b$ and $b\geqslant 2$ since $a=q-3\geqslant 0$.

\textbf{(c)} Let $G_{1}$ be a mixed extension of $P_{4}$ of type $(l,-2,n,-3)$ with $l\geqslant 1, n\geqslant 1.$  Then the characteristic polynomial of $G = G_{1} \bigcup a K_{1}$ is given by a product of coprime polynomials  as follows
\begin{eqnarray}\label{thm3.3-2-3} f_{G }(x)=x^{a+3}(x+1)^{l+n-1}[x^{3}-(l+n-1)x^{2}-(5n+2l-ln)x+6ln]
\end{eqnarray}
Hence,  by \eqref{lem2.5-1} and \eqref{thm3.3-2-3},  we obtain that $G$ is cospectral with $K_{p,k}^{q}$ if and only if
\begin{align}
   \begin{cases}\label{3.3-2c1}
     q-1 = a+3, \\
     p-2 = l+n-1, \\
     p+kq-1 = 5n+2l-ln, \\
     kq(p-k-1) = 6ln,
   \end{cases}
\end{align}
{\sl i.e.},
\begin{align}
   \begin{cases}\label{3.3-2c2}
      a =q- 4, \\
      l+n=p-1, \\
      3n-ln=kq-p+1, \\
      6ln=kq(p-k-1).
   \end{cases}
\end{align}
By calculating the system of equations \eqref{3.3-2c2}, we obtain that
\begin{align*}
   \begin{cases}
    a= q-4, \\
    n= \frac{1}{18}\left[kq(p-k-1) +6(kq-p+1)\right], \\[5pt]
    l= \frac{3kq(p-k-1)}{kq(p-k+5)-6(p-1)}.
   \end{cases}
\end{align*}  \\
and $k, p, q$ satisfy the equation \\
$$\begin{array}{l}k^{4}q^{2}-(2p+10)q^{2}k^{3}+[(p^{2}+10p+25)q^{2}+(30p-84)q]k^{2}\\-(30p^{2}+66p-96)kq+
   144p^{2}+288p+144=0.\end{array}$$

\textbf{(d)} Let $G_{1}$ be a mixed extension of $P_{4}$ of type $(l,m,-n,s)$ with $n \geqslant 1$ and $$(l,m,s) \in \left\{\begin{array}{l}(3,3,6), (3,4,4),(3,6,3),(4,2,6),(4,3,3),\\
(4,6,2),(5,2,4),(5,4,2),(7,2,3), (7,3,2)\end{array}\right\}.$$
For $i=1,2,\ldots,10,$ let $G_1^{(i)}$ be a mixed extension of $P_{4}$ of type $(l,m,-n,s)$ with $n \geqslant 1$ as follows:

\begin{lrbox}{\tablebox}
\begin{tabular}{|c|c|c|c|c|c|c|c|c|c|c|}
\multicolumn{11}{c}{}\\
\hline $(l,m,-n,s)$ &$(3,3,-n,6)$&$(3,4,-n,4)$&$(3,6,-n,3)$&$(4,2,-n,6)$&$(4,3,-n,3)$&$(4,6,-n,2)$&$(5,2,-n,4)$
&$(5,4,-n,2)$&$(7,2,-n,3)$&$(7,3,-n,2)$ \\
\hline
$G_1^{(i)}$ &$G_1^{(1)}$ &$G_1^{(2)}$ &$G_1^{(3)}$ &$G_1^{(4)}$ &$G_1^{(5)}$ &$G_1^{(6)}$ &$G_1^{(7)}$ &$G_1^{(8)}$ &$G_1^{(9)}$ &$G_1^{(10)}$\\
\hline

\multicolumn{11}{c}{}\\
\end{tabular}\end{lrbox}
\resizebox{5.8 in}{!}{\usebox{\tablebox}}
 Then the characteristic polynomial of $G^{(i)}=G_{1}^{(i)}\cup aK_{1}$  is given by a product of coprime polynomials  as follows
 $$f_{G^{(i)}}(x) =\left\{\begin{array}{ll}
 x^{a+n}(x+1)^{9} \left[x^{3}-9x^{2}-(8n-15)x+40n-25\right],&\text{if}\;i=1,\\
x^{a+n}(x+1)^{8} \left[ x^{3}-8x^{2}-(6n-9)x+34n+18\right],&\text{if}\;i=2,\\
x^{a+n}(x+1)^{9} \left[x^{3}-9x^{2}-(5n-6)x+37n+16\right],&\text{if}\;i=3,\\
x^{a+n}(x+1)^{9} \left[x^{3}-9x^{2}-(9n-15)x+45n+25\right],&\text{if}\;i=4,\\
x^{a+n}(x+1)^{7} \left[x^{3}-7x^{2}-(6n-4)x+30n+12\right],&\text{if}\;i=5,\\
x^{a+n}(x+1)^{9} \left[ x^{3}-9x^{2}-(5n+1)x+37n+9\right],&\text{if}\;i=6,\\
x^{a+n}(x+1)^{8} \left[x^{3}-8x^{2}-(8n-9)x+40n+18\right],&\text{if}\;i=7,\\
x^{a+n}(x+1)^{8} \left[x^{3}-8x^{2}-(6n+1)x+34n+8\right],&\text{if}\;i=8,\\
x^{a+n}(x+1)^{9} \left[x^{3}-9x^{2}-(9n-6)x+45n+16\right],&\text{if}\;i=9,\\
x^{a+n}(x+1)^{9} \left[x^{3}-9x^{2}-(8n+1)x+40n+9\right],&\text{if}\;i=10.\\
\end{array}\right.$$

Hence, for each $i = 1, 2,\ldots,10,$   $G^{(i)}$ is cospectral with $K_{p,k}^{q}$ if and only if
$$ f_{G^{(i)}}(x) = x^{q-1}(x+1)^{p-2}[x^{3}-(p-2)x^{2}-(p+kq-1)x+kq(p-k-1)].$$

(1) Then $G=G^{(1)}$ is cospectral with $K_{p,k}^{q}$ if and only if
$$\left\{\begin{array}{l}
a+n=q-1,\\
9=p-2,\\
8n-15=p+kq-1,\\
40n-25=kq(p-k-1).\end{array}\right.$$
This implies that $(5-k)kq=100,$ and we have $k=1, q=25$ or $k=4, q=25$. In either case, there is no integer solution for $n.$ Hence $G^{(1)}$ is not cospectral with $K_{p,k}^{q}.$  In an analogous way, we obtain that  $G^{(4)}$ and $G^{(10)}$ are not  cospectral with $K_{p,k}^{q}$  for any positive integers $n, p, k,q$ and $p-2\geqslant k.$

 (2) Then $G=G^{(2)}$  is cospectral with $K_{p,k}^{q}$ if and only if
$$\left\{\begin{array}{l}
a+n=q-1,\\
8=p-2,\\
6n-9=p+kq-1,\\
34n+18=kq(p-k-1),\end{array}\right.$$
Note that $1\leqslant k\leqslant 8=p-2,$ the above equation has only two solutions $(a,n,p,k,q)=(26,18,10,2,45),(56,63,10,3,120).$ Hence $G=G^{(2)}$  is cospectral with $K_{p,k}^{q}$ if and only if $G = G_{1}^{(2)} \bigcup 26K_{1} \simeq K_{10,2}^{45}$ where $G_{1}^{(2)}$ is a mixed extension of $P_{4}$ of type $(3,4,-18,4);$ or $G = G_{1}^{(2)} \bigcup 56 K_{1} \simeq K_{10,3}^{120},$ where $G_{1}^{(2)}$ is a mixed extension of $P_{4}$ of type $(3,4,-63,4).$

In a similar way, we deal with  other $G^{(i)}$ and obtain the results described in Theorem \ref{thm3.3} (7) which is also given  by the following table.

\begin{center}
\begin{lrbox}{\tablebox}
\begin{tabular}{|c|c|c|}
\multicolumn{3}{c}{}\\
\hline $(l,m,s)$ &$G^{(i)}=G_1^{(i)}\cup a K_1\simeq K_{p,k}^q$ &$G^{(i)}_{1}$ \text{is a mixed extension of} $P_{4}$ \text{of type} $(l,m,-n,s)$\\
\hline
$(3,3,6)$&\text{No}& \\
\hline
&$G_{1}^{(2)} \bigcup 26K_{1} \simeq K_{10,2}^{45}$&$(3,4,-18,4)$\\
\cline{2-3}
\raisebox{1.6ex}[0pt]{$(3,4,4)$}&$G_{1}^{(2)} \bigcup 56 K_{1} \simeq K_{10,3}^{120},$ &$(3,4,-63,4)$\\
\hline

&$G_{1}^{(2)}\bigcup 63K_{1} \simeq K_{11,1}^{84}$&$(3,6,-20,3)$\\
\cline{2-3}
\raisebox{1.6ex}[0pt]{$(3,6,3)$}&$G_{1}^{(3)} \bigcup 63K_{1} \simeq K_{11,2}^{112}$&$(3,6,-48,3)$\\

\hline
$(4,2,6)$&\text{No}&\\
\hline
&$G_{1}^{(5)} \bigcup 27K_{1} \simeq K_{9,1}^{36}$&$(4,3,-8,3)$\\
\cline{2-3}
\raisebox{1.6ex}[0pt]{$(4,3,3)$}&$G_{1}^{(5)} \bigcup 21K_{1} \simeq K_{9,2}^{36}$ &$(4,3,-14,3)$\\
\hline
$(4,6,2)$&$G_{1}^{(6)} \bigcup 35K_{1} \simeq K_{11,2}^{63}$ &$(4,6,-27,2)$\\
\hline
$(5,2,4)$&$G_{1}^{(7)} \bigcup 17K_{1} \simeq K_{10,2}^{27}$&$(5,2,-9,4)$\\
\hline
$(5,4,2)$&$G_{1}^{(8)} \bigcup 11K_{1} \simeq K_{10,2}^{20}$&$(5,4,-8,2)$\\
\hline
$(7,2,3)$ &\text{No}&\\
\hline
$(7,3,2)$ &\text{No}&\\
\hline

\multicolumn{3}{c}{}\\
\end{tabular}\end{lrbox}
\resizebox{3.8 in}{!}{\usebox{\tablebox}}
\end{center}

\textbf{(e)} Let $G_{1}$ be a mixed extension of $P_{4}$ of type $(l,m,n,s)$ with $(l,m,n,s) \in S,$ where
$$S=\left\{\begin{array}{l}(2,2,2,7), (2,2,3,4),(2,2,6,3),(2,3,2,5),\\(2,3,4,3),(2,5,2,4),(2,5,3,3), (3,2,2,3)\end{array}\right\}.$$
Let $G=G_1\cup a K_1.$ It's easy to show that $f_G(x)\neq f_{K_{p,k}^{q}}(x)$ for any $a\in\mathbb{N}$ and $p,k,q\in\mathbb{N}^*$ with $p-2\geqslant k.$ Hence $G$ is not cospectral with $K_{p,k}^q.$

\textbf{Case 3:} Finally, we consider the mixed extensions of $P_{5}.$ Let $G_{1}$ be a mixed extension of $P_{5}$ of type $(1,l,-m,n,1)$ with $l\geqslant 1,m\geqslant 1,n \geqslant 1.$
Then the characteristic polynomial of $G = G_{1} \bigcup a K_{1}$ is given by a product of coprime polynomials  as follows
\begin{eqnarray}\label{thm3.3-3-1} \begin{array}{ll}f_{G }(x)=&x^{a+m}(x+1)^{l+n-1}\\
&\cdot[x^{3}-(l+n-1)x^{2}-(mn-ln+lm+l+n)x+2lmn+ln].\end{array}
\end{eqnarray}
Hence,  by \eqref{lem2.5-1} and \eqref{thm3.3-3-1},  we obtain that $G$ is cospectral with $K_{p,k}^{q}$ if and only if
\begin{align}
   \begin{cases}\label{3.3-31}
     q-1 = a+m, \\
     p-2 = l+n-1, \\
     p+kq-1 = mn-ln+lm+l+n, \\
     kq(p-k-1) = 2lmn+ln,
   \end{cases}
\end{align}
{\sl i.e.},
\begin{align}
   \begin{cases}\label{3.3-32}
    a+m=q-1, \\
    l+n=p-1, \\
    lm+mn-ln=kq, \\
    2lmn+ln=kq(p-k-1).
   \end{cases}
\end{align}
Note that $n$ and $l$ are symmetrical, by calculating the system of equations \eqref{3.3-32},  we obtain that
\begin{align*}
   \begin{cases}
    a= \frac{4pq-2(k+2)q-3(p-1)-\sqrt{(2kq-p+1)^{2}+8kq(p-1)(p-k)}}{4(p-1)}, \\[5pt]
    m= \frac{2kq-p+1+\sqrt{(2kq-p+1)^{2}+8kq(p-1)(p-k)}}{4(p-1)}, \\[5pt]
    n= \frac{1}{2}(p-1 + \sqrt{(p-1)^{2}+2kq+p-1-\sqrt{(2kq-p+1)^{2}+8kq(p-1)(p-k)}}) , \\[5pt]
    l= \frac{1}{2}(p-1 - \sqrt{(p-1)^{2}+2kq+p-1-\sqrt{(2kq-p+1)^{2}+8kq(p-1)(p-k)}}).
   \end{cases}
\end{align*}

Let $s= \sqrt{(2kq-p+1)^{2}+8kq(p-1)(p-k)}.$ Then we obtain that
$$\left\{\begin{array}{l}
    l= \frac{1}{2}(p-1 - \sqrt{(p-1)^{2}+2kq+p-1-s}),\\[5pt]
    m= \frac{1}{4(p-1)}(2kq-p+1+s), \\[5pt]
    n= \frac{1}{2}(p-1 + \sqrt{(p-1)^{2}+2kq+p-1-s}) , \\[5pt]
    a= \frac{1}{4(p-1)}(4pq-2(k+2)q-3(p-1)-s).
     \end{array}\right. $$

\qed\end{proof}

\brem Given a generalized pineapple graph $K_{p,k}^{q}$ with $p-2\geqslant k\geqslant 1, q\geqslant 1.$ Theorems {\rm\ref{thm3.1}} and {\rm\ref{thm3.3}} determine all graphs which are cospectral with $K_{p,k}^{q}.$ Consequently, it is completely determined whether $K_{p,k}^{q}$ is DAS or non-DAS. \erem

\bexm For $p=8,$ $1\leqslant k\leqslant 6, 1\leqslant q\leqslant 100.$
There are $589$ generalized pineapple graphs $K_{p,k}^{q}$ which are DAS and the following $11$  generalized pineapple graphs $K_{p,k}^{q}$ which are non-DAS:
\begin{itemize}
\item[\rm(1)] $K_{8,2}^{84}\simeq G_1\cup 52K_1,$ where $G_1$ is a mixed extension of $P_3$ of type $(-28,-5,7);$
\item[\rm(2)] $K_{8,3}^{42}\simeq G_1\cup 24K_1,$ where $G_1$ is a mixed extension of $P_3$ of type $(-12,-7,7);$
\item[\rm(3)] $K_{8,3}^{91}\simeq G_1\cup 65K_1,$ where $G_1$ is a mixed extension of $P_3$ of type $(-13,-14,7);$
\item[\rm(4)] $K_{8,4}^{21}\simeq G_1\cup 9K_1,$ where $G_1$ is a mixed extension of $P_3$ of type $(-6,-7,7).$
\item[\rm(5)] $K_{8,1}^{16}\simeq G_1\cup 12K_1,$ where $G_1$ is a mixed extension of $P_3$ of type $(4,-4,4);$
\item[\rm(6)] $K_{8,3}^{4}\simeq G_1\cup K_1,$ where $G_1$ is a mixed extension of $P_3$ of type $(2,-3,6);$
\item[\rm(7)] $K_{8,3}^{11}\simeq G_1\cup 5K_1,$ where $G_1$ is a mixed extension of $P_3$ of type $(3,-6,5);$
\item[\rm(8)] $K_{8,3}^{16}\simeq G_1\cup 8K_1,$ where $G_1$ is a mixed extension of $P_3$ of type $(4,-8,4);$
\item[\rm(9)] $K_{8,2}^{6}\simeq G_1\cup 2K_1,$ where $G_1$ is a mixed extension of $P_4$ of type $(2,-2,5,-3);$
\item[\rm(10)] $K_{8,2}^{9}\simeq G_1\cup 4K_1,$ where $G_1$ is a mixed extension of $P_5$ of type $(1,2,-4,5,1);$
\item[\rm(11)] $K_{8,3}^{31}\simeq G_1\cup 15K_1,$ where $G_1$ is a mixed extension of $P_5$ of type $(1,3,-15,4,1).$

\end{itemize}
 \eexm

 \bexm For $p=11,$ $1\leqslant k\leqslant 9, 1\leqslant q\leqslant 100.$
There are $884$ generalized pineapple graphs $K_{p,k}^{q}$ which are DAS and the following $16$  generalized pineapple graphs $K_{p,k}^{q}$ which are non-DAS:
\begin{itemize}
\item[\rm(1)] $K_{11,1}^{56}\simeq K_8\cup CS_{3,36}\cup 20 K_1;$
\item[\rm(2)] $K_{11,2}^{45}\simeq G_1\cup 4K_1,$ where $G_1$ is a mixed extension of $P_3$ of type $(-40,-2,10);$
\item[\rm(3)] $K_{11,3}^{60}\simeq G_1\cup 28K_1,$ where $G_1$ is a mixed extension of $P_3$ of type $(-28,-5,10);$
\item[\rm(4)] $K_{11,3}^{90}\simeq G_1\cup 54K_1,$ where $G_1$ is a mixed extension of $P_3$ of type $(-30,-7,10);$
\item[\rm(5)] $K_{11,4}^{30}\simeq G_1\cup 10K_1,$ where $G_1$ is a mixed extension of $P_3$ of type $(-16,-5,10);$
\item[\rm(6)] $K_{11,5}^{18}\simeq G_1\cup 4K_1,$ where $G_1$ is a mixed extension of $P_3$ of type $(-10,-5,10);$
\item[\rm(7)] $K_{11,6}^{21}\simeq G_1\cup 7K_1,$ where $G_1$ is a mixed extension of $P_3$ of type $(-7,-8,10);$
\item[\rm(8)] $K_{11,4}^{15}\simeq G_1\cup 7K_1,$ where $G_1$ is a mixed extension of $P_3$ of type $(4,-8,7);$
\item[\rm(9)] $K_{11,5}^{3}\simeq G_1,$ where $G_1$ is a mixed extension of $P_3$ of type $(2,-3,9);$
\item[\rm(10)] $K_{11,5}^{49}\simeq G_1\cup 24K_1,$ where $G_1$ is a mixed extension of $P_3$ of type $(5,-25,6);$
\item[\rm(11)] $K_{11,2}^{7}\simeq G_1\cup 3K_1,$ where $G_1$ is a mixed extension of $P_5$ of type $(1,2,-3,8,1);$
\item[\rm(12)] $K_{11,3}^{13}\simeq G_1\cup 6K_1,$ where $G_1$ is a mixed extension of $P_5$ of type $(1,3,-6,7,1);$
\item[\rm(13)] $K_{11,3}^{25}\simeq G_1\cup 14K_1,$ where $G_1$ is a mixed extension of $P_5$ of type $(1,5,-10,5,1);$
\item[\rm(14)] $K_{11,4}^{6}\simeq G_1\cup K_1,$ where $G_1$ is a mixed extension of $P_5$ of type $(1,2,-4,8,1);$
\item[\rm(15)] $K_{11,4}^{29}\simeq G_1\cup 14K_1,$ where $G_1$ is a mixed extension of $P_5$ of type $(1,4,-14,6,1);$
\item[\rm(16)] $K_{11,6}^{14}\simeq G_1\cup 3K_1,$ where $G_1$ is a mixed extension of $P_5$ of type $(1,2,-10,8,1).$
\end{itemize}
 \eexm

\brem
Every situation in Theorems {\rm\ref{thm3.1}} and {\rm\ref{thm3.3}} can occur. For example, we have a table as follows for $k \leqslant 5.$

\begin{lrbox}{\tablebox}
\begin{tabular}{|c|c|c|l|}

\multicolumn{3}{c}{}\\

\hline
 $k$
 &\multicolumn{1}{|c|}{ \ \ \ \ \ \ \ \ \ \ \ \ $G = K_{t }\bigcup {\rm CS}_{m,n} \bigcup aK_{1}$ \ \ \ \ \ \ \ \  \ \ \ \ }
 &\multicolumn{1}{|c|}{$G = P_{3}$ of type $(-l,-m,n)$} \\
\hline

$1$
& $G = K_{8}\bigcup {\rm CS}_{3,36} \bigcup 20K_{1} \simeq K_{11,1}^{56}$
& $G = P_{3}$ of type $(-111,-1,183) \cong K_{184,1}^{111}$ \\
\hline

$2$
& $G = K_{8}\bigcup {\rm CS}_{9,16} \bigcup 16K_{1} \simeq K_{17,2}^{32}$
& $G = P_{3}$ of type $(-10,-3,5) \bigcup 8K_{1} \simeq K_{6,2}^{20}$ \\
\hline

$3$
& $G = K_{6}\bigcup {\rm CS}_{9,11} \bigcup 4K_{1} \simeq K_{15,3}^{15}$
& $G = P_{3}$ of type $(-6,-3,6) \bigcup 2K_{1} \simeq K_{7,3}^{10}$ \\
\hline

$4$
& $G = K_{9}\bigcup {\rm CS}_{8,27} \bigcup 3K_{1} \simeq K_{17,4}^{30}$
& $G = P_{3}$ of type $(-6,-7,7) \bigcup 9K_{1} \simeq K_{8,4}^{21}$ \\
\hline

$5$
& $G = K_{15}\bigcup {\rm CS}_{10,57} \bigcup 6K_{1} \simeq K_{25,5}^{63}$
& $G = P_{3}$ of type $(-10,-5,10) \bigcup 4K_{1} \simeq K_{11,5}^{18}$ \\
\hline

\end{tabular}
\end{lrbox}
\resizebox{5.5in}{!}{\usebox{\tablebox}}

\begin{lrbox}{\tablebox}
\begin{tabular}{|c|c|c|l|}

\hline
 $k$
 &\multicolumn{1}{|c|}{ \ \ \ \ \ \ \ \ \ \ $G = P_{3}$ of type $(l,-m,n)$ \ \ \ \ \ \ \ \ \ \ \ }
 &\multicolumn{1}{|c|}{ \ \ \ \ \ \ \ \ \ \ $G = P_{3}$ of type $(l,m,n)$ \ \ \ \ \ \ \ \ \ \ \ \ \ \ } \\
\hline

$1$
& $G = P_{3}$ of type $(2,-2,2) \bigcup 2K_{1} \simeq K_{4,1}^{4}$
& $G = P_{3}$ of type $(2,6,3) \bigcup 3K_{1} \simeq K_{10,1}^{4}$ \\
\hline

$2$
& $G = P_{3}$ of type $(2,-4,3) \bigcup 3K_{1} \simeq K_{5,2}^{7}$
& $G = P_{3}$ of type $(2,9,4) \bigcup 2K_{1} \simeq K_{14,2}^{3}$ \\
\hline

$3$
& $G = P_{3}$ of type $(2,-3,6) \bigcup K_{1} \simeq K_{8,3}^{4}$
& $G = P_{3}$ of type $(3,25,6) \bigcup 4K_{1} \simeq K_{33,3}^{5}$ \\
\hline

$4$
& $G = P_{3}$ of type $(4,-16,5) \bigcup 15K_{1} \simeq K_{9,4}^{31}$
& $G = P_{3}$ of type $(2,39,16) \bigcup 5K_{1} \simeq K_{56,4}^{6}$ \\
\hline

$5$
& $G = P_{3}$ of type $(2,-3,9) \simeq K_{11,5}^{3}$
& $G = P_{3}$ of type $(5,81,10) \bigcup 8K_{1} \simeq K_{95,5}^{9}$ \\
\hline

\end{tabular}
\end{lrbox}
\resizebox{5.5in}{!}{\usebox{\tablebox}}

\begin{lrbox}{\tablebox}
\begin{tabular}{|c|c|c|l|}

\hline
 $k$
 &\multicolumn{1}{|c|}{$G = P_{4}$ of type $(m,-3,-2,-2)$}
 &\multicolumn{1}{|c|}{ \ \ \ \ \ \ \ \ \ $G = P_{4}$ of type $(-2,m,n,-2)$ \ \ \ \ \ \ \ \ \ \ } \\
\hline

$1$
& $G = P_{4}$ of type $(3,-3,-2,-2) \bigcup 7K_{1} \simeq K_{5,1}^{12}$
& $G = P_{4}$ of type $(-2,3,1,-2) \bigcup K_{1} \simeq K_{5,1}^{4}$ \\
\hline

$2$
& $-$
& $G = P_{4}$ of type $(-2,6,2,-2) \bigcup K_{1} \simeq K_{9,2}^{4}$ \\
\hline

$3$
& $-$
& $G = P_{4}$ of type $(-2,9,3,-2) \bigcup K_{1} \simeq K_{13,3}^{4}$ \\
\hline

$4$
& $-$
& $G = P_{4}$ of type $(-2,12,4,-2) \bigcup K_{1} \simeq K_{17,4}^{4}$ \\
\hline

$5$
& $-$
& $G = P_{4}$ of type $(-2,15,5,-2) \bigcup K_{1} \simeq K_{21,5}^{4}$ \\
\hline

\end{tabular}
\end{lrbox}
\resizebox{5.5in}{!}{\usebox{\tablebox}}

\begin{lrbox}{\tablebox}
\begin{tabular}{|c|c|c|l|}

\hline
 $  k  $
 &\multicolumn{1}{|c|}{ \ \ \ \ \ \ \ \ \ $G = P_{4}$ of type $(n,-2,m,-3)$ \ \ \ \ \ \ \ \ \ \ }
 &\multicolumn{1}{|c|}{ \ \ \ \ \ \ \ \ \  $G = P_{5}$ of type $(1,l,-m,n,1)$ \ \ \ \ \ \ \ \  \ \ \ } \\
\hline

$1$
& $-$
& $G = P_{5}$ of type $(1,2,-4,2,1) \bigcup 7K_{1} \simeq K_{5,1}^{12}$ \\
\hline

$2$
& $G = P_{4}$ of type $(2,-2,5,-3) \bigcup 2K_{1} \simeq K_{8,2}^{6}$
& $G = P_{5}$ of type $(1,2,-8,3,1) \bigcup 8K_{1} \simeq K_{6,2}^{17}$  \\
\hline

$3$
& $G = P_{4}$ of type $(3,-2,15,-3) \bigcup 2K_{1} \simeq K_{19,3}^{6}$
& $G = P_{5}$ of type $(1,3,-15,4,1) \bigcup 15K_{1} \simeq K_{8,3}^{31}$  \\
\hline

$4$
& $-$
& $G = P_{5}$ of type $(1,4,-24,5,1) \bigcup 24K_{1} \simeq K_{10,4}^{49}$ \\
\hline

$5$
& $-$
& $G = P_{5}$ of type $(1,5,-35,6,1) \bigcup 35K_{1} \simeq K_{12,5}^{71}$ \\
\hline

\end{tabular}
\end{lrbox}
\resizebox{5.5in}{!}{\usebox{\tablebox}}

\erem

\

\

\end{document}